% SIAM Article Template
\documentclass[final,hidelinks,onefignum,onetabnum]{siamart220329}

% Information that is shared between the article and the supplement
% (title and author information, macros, packages, etc.) goes into
% ex_shared.tex. If there is no supplement, this file can be included
% directly.
\usepackage[margin=1in, left=1.5in, right=1.5in]{geometry} % Custom margins

% Packages and macros go here
\usepackage{lipsum}
\usepackage{amsfonts,amssymb,amsmath}
\usepackage{physics}
\usepackage{microtype}
\usepackage{graphicx}
\usepackage{epstopdf}
\usepackage{algorithmic}
\usepackage{pgfplots}\pgfplotsset{compat=1.9}
\usepackage{array} % <- Preamble
\usepackage[caption=false]{subfig} % <- Preamble
%
% \usepgfplotslibrary{external}
% \tikzexternalize
\ifpdf
  \DeclareGraphicsExtensions{.eps,.pdf,.png,.jpg}
\else
  \DeclareGraphicsExtensions{.eps}
\fi
\usepackage{graphicx,epstopdf} % <- Preamble
\usepackage[caption=false]{subfig} % <- Preamble
% Prevent itemized lists from running into the left margin inside theorems and proofs
\usepackage{enumitem}
\setlist[enumerate]{leftmargin=.5in}
\setlist[itemize]{leftmargin=.5in}

% Add a serial/Oxford comma by default.

% Used for creating new theorem and remark environments
\newsiamthm{remark}{Remark}
\newsiamthm{assumptions}{Assumption}
%\newsiamremark{hypothesis}{Hypothesis}
%\crefname{hypothesis}{Hypothesis}{Hypotheses}
%\newsiamthm{claim}{Claim}

% Sets running headers as well as PDF title and authors
\headers{FISTA Converges Linearly for PnP and RED}{A. Sinha and K. N. Chaudhury}

% Title. If the supplement option is on, then "Supplementary Material"
% is automatically inserted before the title.
\title{FISTA Iterates Converge Linearly for Denoiser-Driven Regularization\thanks{
    \textbf{Funding:} A.~Sinha was supported by PMRF Fellowship TF/PMRF-22-5534 and K.~N.~Chaudhury was supported by grants CRG/2020/000527 and
STR/2021/000011 from the Government of India.}}

% Authors: full names plus addresses.
\author{Arghya Sinha\thanks{Department of Computational and Data Sciences, Indian Institute of Science, Bangalore
    (\email{arghyasinha@iisc.ac.in}).}
  \and Kunal N. Chaudhury\thanks{Department of Electrical Engineering, Indian Institute of Science, Bangalore
    (\email{kunal@iisc.ac.in}).}
}

\usepackage{amsopn}
\DeclareMathOperator{\diag}{diag}

 % Definitions -----------------------------------
% Standard sets
\newcommand{\Sy}{\mathbb{S}}
\renewcommand{\Re}{\mathbb{R}}

% Calligraphic/Gothic Symbols

% Matrices
\def\A{\mathbf{A}}
\def\B{\mathbf{B}}

\def\D{\mathbf{D}}

\def\G{\mathbf{G}}
\def\H{\mathbf{H}}
\def\I{\mathbf{I}}

\def\K{\mathbf{K}}
\def\M{\mathbf{M}}

\def\P{\mathbf{P}}
\def\Q{\mathbf{Q}}
\def\R{\mathbf{R}}
\def\S{\mathbf{S}}
\def\T{\mathtt{T}}

\def\W{\mathbf{W}}

% Vectors

\def\b{\boldsymbol{b}}

\def\e{\boldsymbol{e}}

\def\q{\boldsymbol{q}}
\def\r{\boldsymbol{r}}
\def\s{\boldsymbol{s}}
\def\u{\boldsymbol{u}}
\def\v{\boldsymbol{v}}

\def\x{\boldsymbol{x}}
\def\y{\boldsymbol{y}}
\def\z{\boldsymbol{z}}
\def\ze{\mathbf{0}}
\def\one{\mathbf{1}}

% Functions
\newcommand{\fix}{\mathrm{fix}}
\newcommand{\prox}{\mathrm{prox}}
\newcommand{\Span}{\mathrm{span}}

% Triple norm
\newcommand{\mynegspace}{\hspace{-0.12em}}
\newcommand{\lvvvert}{\rvert\mynegspace\rvert\mynegspace\rvert}
\newcommand{\rvvvert}{\rvert\mynegspace\rvert\mynegspace\rvert}

% Additional %
\usepackage[normalem]{ulem}
\captionsetup{justification=centering}

% Optional PDF information
\ifpdf
\hypersetup{
  pdftitle={PnPFISTA},
  pdfauthor={A. Sinha and K. N. Chaudhury}
}
\fi

% The next statement enables references to information in the
% supplement. See the xr-hyperref package for details.

% \externaldocument[][nocite]{ex_supplement}

% FundRef data to be entered by SIAM
%<funding-group specific-use="FundRef">
%<award-group>
%<funding-source>
%<named-content content-type="funder-name"> 
%</named-content> 
%<named-content content-type="funder-identifier"> 
%</named-content>
%</funding-source>
%<award-id> </award-id>
%</award-group>
%</funding-group>

\usepackage{microtype}
\usepackage{palatino}

\begin{document}

\maketitle

% REQUIRED
\begin{abstract}
The effectiveness of denoising-driven regularization for image reconstruction has been widely recognized. Two prominent algorithms in this area are Plug-and-Play (\texttt{PnP}) and Regularization-by-Denoising (\texttt{RED}). We consider two specific algorithms \texttt{PnP-FISTA} and \texttt{RED-APG}, where regularization is performed by replacing the proximal operator in the \texttt{FISTA} algorithm with a powerful denoiser. The iterate convergence of \texttt{FISTA} is known to be challenging with no universal guarantees. Yet, we show that for linear inverse problems and a class of linear denoisers, global linear convergence of the iterates of \texttt{PnP-FISTA} and \texttt{RED-APG} can be established through simple spectral analysis.
\end{abstract}

  % REQUIRED
\begin{keywords}
image regularization, plug-and-play, regularization-by-denoising, FISTA, linear convergence.
\end{keywords}

  % REQUIRED
  
\begin{MSCcodes}
94A08, 41A25, 65F10
\end{MSCcodes}

  \section{Introduction}

Over the past decade, there has been a growing interest in applying denoisers for iterative reconstruction. This trend was triggered by the seminal works \cite{romano2017little,sreehari2016plug}, where the idea of using off-the-shelf denoisers for regularization was formalized. Subsequent studies have revealed that trained denoisers produce results that are competitive with end-to-end deep learning methods. Notably, this approach allows us to combine robust denoising priors with model-based inversion for variational regularization. 
 
{The effectiveness of denoiser-driven regularization has motivated researchers to investigate its convergence properties~\cite{cohen_regularization_2021,gavaskarPlugandPlayRegularizationUsing2021a,gavaskar2020plug,goujon_learning_2024,hauptmann_convergent_2024,hurault2022gradient,liu_study_2021,nair2024averaged,pesquet_learning_2021,reehorst2018regularization,ryu2019plug,xu_provable_2020}. We refer the reader to the recent article~\cite{elad_image_2023} for an excellent survey on denoiser-driven regularization.}The convergence theory in this area is primarily motivated by convex optimization and monotone operator theory \cite{bauschke2019convex,ryu_scaled_2022}. Indeed, a common theme in many of these papers is to guarantee convergence by designing an appropriate regularizer \cite{cohen_regularization_2021,hurault2022gradient} or by restricting the denoiser (during training or by design) to a class of nonexpansive operators \cite{hertrich_convolutional_2021,pesquet_learning_2021}. While the current focus is on trained denoisers, it is also known that their black-box nature can lead to unpredictable behavior; e.g., {see~\Cref{fig:divergence}, \cite[Fig.~6]{nair2024averaged}, and \cite[Fig.~1]{terris2024equivariant}}. Moreover, the convergence guarantees for these denoisers are often based on assumptions that are difficult to verify in practice. In this work, we demonstrate that using a class of data-driven linear denoisers, which are not state-of-the-art but still give good reconstructions \cite{nair2022plug,nair2019hyperspectral,sreehari2016plug}, we can get strong convergence guarantees under fully verifiable assumptions.

\subsection{PnP and RED}
We consider two popular regularization methods, Plug-and-Play (\texttt{PnP}) and Regularization-by-Denoising (\texttt{RED}). In particular, we revisit the \texttt{PnP-FISTA} and \texttt{RED-APG} {(\texttt{RED}–Accelerated Proximal Gradient)} algorithms \cite{gavaskarPlugandPlayRegularizationUsing2021a,reehorst2018regularization} that are modeled on the classical \texttt{FISTA} algorithm {(Fast Iterative-Shrinkage Thresholding Algorithm)}. This is an {accelerated} first-order method for solving convex optimization problems of the form
  \begin{equation}
    \label{fplusg}
    \min_{\x \in \Re^n} \ f(\x) + g(\x),
  \end{equation}
where $f$ is smooth and $g$ is nonsmooth but proximable \cite{beck_fast_2009}. In image reconstruction, $f$ is a model-based loss function, and $g$ is an image regularizer. Starting with an initialization $\y_1=\x_0$ and $t_1=1$, \texttt{FISTA} generates a sequence of images $\{\x_k\}_{k \geqslant 1}$ using the updates:
  \begin{subequations}
    \begin{align}
      \x_{k}  &= \prox_{\gamma g} \big(\y_k - \gamma \nabla \! f(\y_k) \big), \label{xupdate} \\
      t_{k+1}  &= \frac{1}{2} \left(1+\sqrt{ 1+4t_k^2} \right),  \label{tk} \\
      \y_{k+1} &= \x_{k} + \frac{t_k-1}{t_{k+1}} \big(\x_{k}-\x_{k-1} \big),
    \end{align}
  \end{subequations}
  where $\prox_{\gamma g}$ is the proximal operator of $\gamma g$ and $\gamma >0$ is the step size. For sufficiently small $\gamma$ and appropriate technical assumptions on $f$ and $g$, it was shown by Beck and Teboulle that the objective $f(\x_k) + g(\x_k)$ converges sublinearly to the minimum of  \cref{fplusg}.

In \texttt{PnP-FISTA}, the proximal operator (which effectively performs image denoising) is substituted by a powerful denoiser, i.e., the update in \cref{xupdate} is performed as
  \begin{equation}
    \label{eq:pnpista}
    \x_{k} = \W  \big(\y_k - \gamma \nabla \! f(\y_k) \big),
  \end{equation}
  where $\W$ is a denoising operator. In this note, we will work with a particular class of linear denoisers. This includes kernel denoisers such as \texttt{NLM} \cite{buades2005non,sreehari2016plug}, \texttt{LARK}, and \texttt{GLIDE} \cite{milanfar2013symmetrizing}. This is the simplest \texttt{PnP} model that works well in practice \cite{gavaskarPlugandPlayRegularizationUsing2021a,gavaskar2023PnPCS,sreehari2016plug} and yet is analytically tractable. 
  {We wish to emphasize that this class of denoisers is not strictly ``linear" like the simple Gaussian or the box filter. Rather, the action of the denoiser is given by  $\x \mapsto \W(\x_g)\, \x$ which acts linearly on the input image $\x$ and depends non-linearly on a guide image $\x_g$; the latter serves as a surrogate for the ground truth image. In fact, it is precisely this data-driven nature, referred to as ``pseudo-linearity'' \cite{milanfar2013tour}, that makes them more effective than classical linear denoisers. In iterative image reconstruction, the initial guide image serves as a rough estimate that evolves over the first few iterations as per the updates in \cref{eq:pnpista}. To achieve high-quality reconstructions \cite{gavaskarPlugandPlayRegularizationUsing2021a,sreehari2016plug}, a common practice is to fix the guide image after a certain number of iterations. Thus, as far as the question of convergence is concerned, we can assume the denoiser to be a purely linear operator $\x \mapsto \W \x$. While the choice of guide determines the reconstruction quality, it has no direct bearing on the convergence problem.}

  In \texttt{PnP-FISTA}, we treat the loss $f$ as the smooth component of the objective function. On the other hand, in \texttt{RED-APG}, the accelerated variant of \texttt{RED}, we work with the first-order approximation of the regularizer $g$ \cite[Sec.~V]{reehorst2018regularization}. More precisely, starting with  $t_0=1$ and $\v_0 \in \Re^n$, the updates in \texttt{RED-APG} are given by
  \begin{subequations}
    \begin{align}
      \x_{k} &= \prox_{(\lambda L)^{-1} \, f } (\v_{k-1}), \\
      t_{k}  &= \frac{1}{2} \left(1+\sqrt{ 1+4t_{k-1}^2} \right), \\
      \y_{k} &= \x_{k} +  \frac{t_{k-1}-1}{t_k}  \, (\x_{k}-\x_{k-1}), \\
      \v_{k} &= \theta \, \W(\y_k)+ (1-\theta) \, \y_k,
    \end{align}
  \end{subequations}
  where $\lambda > 0, \, L \geqslant 1$ are the internal parameters and $\theta=1/L$.

\subsection{Relation to prior work}
Although \texttt{PnP} was originally conceived from an algorithmic point-of-view, the iterations in \texttt{PnP-FISTA} can nonetheless be seen as solving an optimization problem of the form \cref{fplusg}. As a result, we can establish objective convergence of \texttt{PnP-FISTA} and \texttt{RED-APG} using existing results \cite{beck2017book,beck_fast_2009}. In this work, we analyze their iterate convergence. This was motivated by a recent work \cite{ACK2023-contractivity}, where we were able to establish global linear convergence of \texttt{PnP-ISTA}, which is modeled on the \texttt{ISTA} algorithm for minimizing \cref{fplusg}. It is known that \texttt{FISTA} is faster than its non-accelerated counterpart \texttt{ISTA} in terms of objective convergence \cite{beck_fast_2009}; however, there is no direct relation in terms of iterate convergence (see \Cref{fig:plot}). In fact, while we can guarantee iterate convergence of \texttt{ISTA} using the concept of Mann iterations \cite{beck2017book}, establishing iterate convergence of \texttt{FISTA} is challenging with no universal convergence guarantees. In this direction, Chambolle and Dossal \cite{chambolle2015convergence} have shown that a different choice of momentum parameters can guarantee iterate convergence. Lorenz and Pock obtained similar results \cite{lorenz2015inertial}, although neither work established the convergence rate. On the other hand, asymptotic linear convergence of \texttt{FISTA} was established by Tao et al. \cite{tao2016local} and Johnstone and Moulin \cite{johnstone2015lyapunov} for sparse minimization such as LASSO. Similar to our work, spectral analysis of the underlying operators comes up in \cite{tao2016local}; however, their overall analysis is much more intricate due to the nonlinearity of the denoising operator in LASSO. Notably, the results mentioned above do not apply to  \texttt{PnP-FISTA} and \texttt{RED-APG}. In fact, to the best of our knowledge, convergence analysis of these algorithms has not been reported in the literature. 
  
  {The present contribution lies in establishing global linear convergence of the \texttt{PnP-FISTA} and \texttt{RED-APG} iterates in the context of linear inverse problems and data-driven linear denoisers. In this regard, we wish to comment on prior convergence results for linear denoisers~\cite{gavaskarPlugandPlayRegularizationUsing2021a,gavaskar2020plug,liu_study_2021,sreehari2016plug}. In \cite{gavaskarPlugandPlayRegularizationUsing2021a,sreehari2016plug}, it was demonstrated that \texttt{PnP} with kernel denoisers can be formulated as a convex optimization problem of the form \cref{fplusg}; objective convergence of \texttt{PnP-FISTA} and \texttt{PnP-ADMM} was subsequently established using standard optimization theory. Iterate convergence of \texttt{PnP-ISTA} and \texttt{PnP-ADMM} (but not \texttt{PnP-FISTA}) follows from standard results \cite{bauschke2019convex}. However, since the smooth part $f$ in \cref{fplusg} is typically not strongly convex, it is non-trivial to establish linear convergence. Yet, it was shown in \cite{gavaskar2020plug,liu_study_2021} that linear convergence can be established for \texttt{PnP-ISTA}. However, the analysis in these papers is tailored to the inpainting problem and relies on assumptions that are difficult to interpret and validate. More recently, we developed a convergence theory for \texttt{PnP-ISTA} and \texttt{PnP-ADMM} in \cite{ACK2023-contractivity} based on practically verifiable assumptions. A key difference with  \cite{ACK2023-contractivity} is that we work with the spectral radius instead of controlling a specific norm.}

\subsection{Notations} We use $\ze$ and $\one$ to denote the all-zeros and the all-ones vectors in $\Re^n$, the dimension $n$ should be clear from the context. We use $\ze_n$ and  $\I_n$ for the zero and identity operators on $\Re^{n}$. The space of symmetric linear operators on $ \Re^{n}$ is denoted by $\Sy^n$. We use $\fix (\W)$ for the fixed points of $\W$, $\ker(\A)$ for the  kernel of $\A$,  $\sigma(\B)$ and $\rho(\B)$ for the spectrum and the spectral radius of $\B$, $\mathrm{diag}(\u)$ for the diagonal matrix with $\u$ as the diagonal elements, $\A \sim \B$ to mean that $\A$ and $\B$ are similar, $\Span(\u)$ for the space spanned by vector $\u$, and $\| \cdot \|_1$, $\| \cdot \|_2$ and $\| \cdot \|_\infty$ to denote the $\ell_1, \ell_2$ and $\ell_\infty$ norms. {A matrix $\W$ is said to be stochastic if $\W$ is nonnegative and each of its rows sum to $1$. In addition, $\W$ is said to be irreducible if the directed graph associated with $\W$ is strongly connected \cite{Horn_Johnson_2012}.}
  
\subsection{Organization} We discuss kernel denoisers and the forward model in Section~\ref{sec:kd}, the main results are derived in Section~\ref{sec:analysis}, and we give numerical simulations in Section~\ref{sec:exp}.

\begin{figure}[t]
    \centering
    \subfloat[DRUNet]{
    \includegraphics[width=0.45\linewidth]{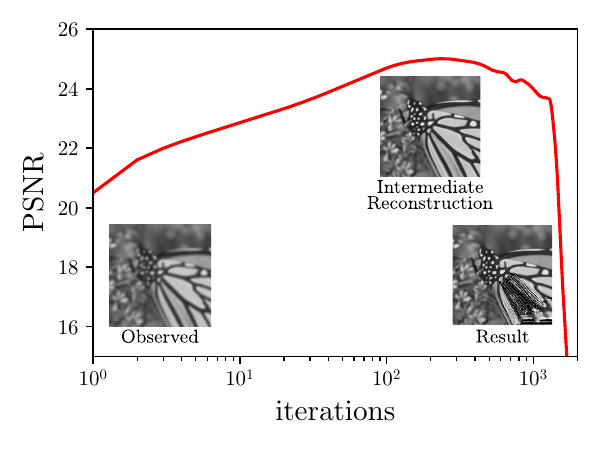}}\hfill
    \subfloat[DnCNN]{
    \includegraphics[width=0.45\linewidth]{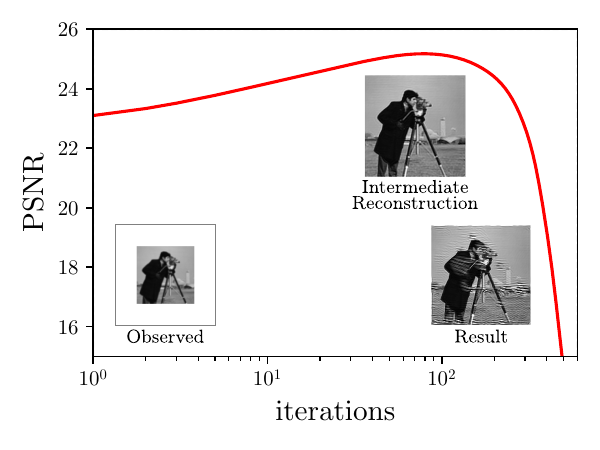}}\hfill
    \caption{
{An example demonstrating the divergence of PnP-ISTA with two commonly used deep denoisers, DRUNet~\cite{zhang2021plug} and DnCNN~\cite{dncnn}. The result in (a) is for Gaussian deblurring ($9 \times 9$ kernel with a standard deviation of $4$), and that in (b) for superresolution ($25 \times 25$ Gaussian blur with a standard deviation of $1.6$ and  $2 \times$ downsampling). In both examples, we observe that the PSNR initially improves but then declines after a certain point as the iterates diverge. Similar results have been reported in~\cite{terris2024equivariant}.}
 }
    \label{fig:divergence}
\end{figure}

  \section{{Background}}
  \label{sec:kd}
We consider the problem of recovering an unknown image $\boldsymbol{\xi} \in \Re^n$ from linear measurements
  $\b = \A \boldsymbol{\xi}  + \e$,
  where $\A \in \Re^{m \times n}$ is the forward operator, $\b \in \Re^m$ is the observed image, and $\e \in \Re^m$ is white Gaussian noise. In particular, we will use the standard loss function
  \begin{equation}
    \label{qloss}
    f(\x) = \frac{1}{2} \| \A \x - \b \|_2^2.
  \end{equation}
  
  As mentioned earlier, we will abstractly work with a linear denoiser $\W \in\Re^{n\times n}$ that transforms a noisy image $\x$ into $\W \x$, where $\x$ is a vectorized representation of the image and $n$ is the number of pixels. We will initially perform the analysis for a symmetric $\W$. Later, in \Cref{subsection:nonsymm}, we will extend the results to nonsymmetric denoisers. We do not require the precise definition of $\W$ but rather consider the following property.
  \begin{assumptions}
  \label{RNP}
      $\sigma(\W)\subset [0,1]$ and 
      $\ker(\A) \, \cap \, \fix(\W) = \{ \ze \}.$
  \end{assumptions}

   We will subsequently show that the above assumption holds for a specific class of denoisers called kernel denoisers \cite{milanfar2013tour}, which includes \texttt{NLM} \cite{buades2005non} and its symmetric variants \cite{milanfar2013symmetrizing,sreehari2016plug}. Broadly speaking, a kernel denoiser is obtained by considering a kernel function and using it to measure the affinity between pairs of pixels. This gives us a kernel operator $\K \in \Re^{n \times n}$, where $\K_{ij}$ encodes the affinity between pixels $i$ and $j$. As mentioned earlier, $\K$ is computed from a surrogate of the ground truth, called the guide image. More specifically, the kernel in \texttt{NLM} has the form 
  \begin{equation}
  \label{kernel}
\K_{ij} = \varphi\big(\zeta(i)-\zeta(j)\big) \, h(i-j),
\end{equation}
  where $ \varphi$ is a multivariate Gaussian, $\zeta(i)$ is a patch around pixel $i$ extracted from the guide image, and $h$ is a symmetric box or hat function supported on a window around the origin. The denoiser is obtained by normalizing $\K$:
  \begin{equation}
    \label{NLM}
    \W_{\texttt{NLM}}=\D^{-1}\, \K, \quad \D := \mathrm{diag}( \K \,\one).
  \end{equation}
It is evident that \cref{NLM} need not be symmetric. We can construct a symmetric variant of \cref{NLM} using the protocol for \texttt{DSG-NLM}~\cite{sreehari2016plug}:
\begin{equation}
\label{DSG-NLM}
   \W_{\texttt{SYM}} = \frac{1}{\|\hat{\one}\|_\infty} \D^{-\frac{1}{2}}\K\D^{-\frac{1}{2}} + \diag \left(\one - \frac{1}{\|\hat{\one}\|_\infty} \, \hat{\one} \right), \quad \hat{\one} :=  \D^{-\frac{1}{2}}\K\D^{-\frac{1}{2}}  \one,
\end{equation}
where we use the subscript $\texttt{SYM}$ to distinguish it from \cref{NLM}. We refer the reader to \cite{milanfar2013tour} for more details on such denoisers.

Kernel denoisers come with a rich set of mathematical properties. We abstract out the specific properties required for our analysis.

\begin{proposition}
\label{assumpK}
\label{propertiesW}
Suppose $\K$ is symmetric positive semidefinite, irreducible, and nonnegative with $\K_{ii} > 0$ for $1 \leqslant i \leqslant n$. Then \cref{NLM} and \cref{DSG-NLM} are well-defined, stochastic, and irreducible, and their eigenvalues are in $[0,1]$.
\end{proposition}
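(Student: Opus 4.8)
The plan is to verify, separately for $\W_{\texttt{NLM}}$ in \cref{NLM} and $\W_{\texttt{SYM}}$ in \cref{DSG-NLM}, three groups of claims: well-definedness together with stochasticity, containment of the spectrum in $[0,1]$, and irreducibility. The unifying observation is that since $\D$ will be seen to have a strictly positive diagonal, conjugating or normalizing by powers of $\D$ transfers the sign pattern and the spectral positivity of $\K$ essentially unchanged.

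First I would settle well-definedness and stochasticity. Because $\K$ is nonnegative with $\K_{ii}>0$, each row sum satisfies $(\K\one)_i=\sum_j \K_{ij}\geqslant \K_{ii}>0$, so $\D=\diag(\K\one)$ is invertible with positive diagonal and $\D^{-1/2}$ makes sense; writing $\N:=\D^{-1/2}\K\D^{-1/2}$, the same bound gives $\hat{\one}_i=(\N\one)_i\geqslant \K_{ii}/d_i>0$, so $\|\hat{\one}\|_\infty>0$ and both \cref{NLM} and \cref{DSG-NLM} are well-defined. For $\W_{\texttt{NLM}}=\D^{-1}\K$ the entries $\K_{ij}/d_i$ are nonnegative and each row sums to $(\K\one)_i/d_i=1$. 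Putting $c=1/\|\hat{\one}\|_\infty$, we have $\W_{\texttt{SYM}}=c\N+\diag(\one-c\hat{\one})$: its off-diagonal entries $c\N_{ij}\geqslant0$, its diagonal entries $c\N_{ii}+(1-c\hat{\one}_i)\geqslant0$ since $\hat{\one}_i\leqslant\|\hat{\one}\|_\infty$, and its $i$th row sums to $c\hat{\one}_i+1-c\hat{\one}_i=1$; hence both denoisers are stochastic.

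Next, the spectrum. Since $\K\succeq0$, congruence gives that $\N$ is symmetric positive semidefinite. As $\W_{\texttt{NLM}}=\D^{-1/2}\N\D^{1/2}$ is similar to $\N$, its spectrum equals $\sigma(\N)\subset[0,\infty)$; being stochastic, $\rho(\W_{\texttt{NLM}})\leqslant\|\W_{\texttt{NLM}}\|_\infty=1$, so $\sigma(\W_{\texttt{NLM}})\subset[0,1]$. For $\W_{\texttt{SYM}}=c\N+\diag(\one-c\hat{\one})$, both summands are positive semidefinite — the first because $c>0$ and $\N\succeq0$, the second because it is diagonal with entries $1-c\hat{\one}_i\in[0,1)$ — so $\W_{\texttt{SYM}}\succeq0$; as it is symmetric and stochastic, $\rho(\W_{\texttt{SYM}})\leqslant1$, giving $\sigma(\W_{\texttt{SYM}})\subset[0,1]$. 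For irreducibility I would note that since $d_i>0$ for all $i$, the off-diagonal zero pattern of both $\W_{\texttt{NLM}}$ (entries $\K_{ij}/d_i$) and $\W_{\texttt{SYM}}$ (entries $c\,d_i^{-1/2}d_j^{-1/2}\K_{ij}$) coincides with that of $\K$; since strong connectedness of a directed graph ignores self-loops, the associated directed graphs are strongly connected precisely because that of $\K$ is, so both denoisers are irreducible.

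The main obstacle is the lower spectral bound for $\W_{\texttt{SYM}}$. The tempting route of writing $\W_{\texttt{SYM}}=\I-c\L$ with $\L=\diag(\N\one)-\N$ a weighted graph Laplacian and using $\L\succeq0$ is not enough: to keep $\I-c\L\succeq0$ one also needs $\rho(\L)\leqslant 1/c=\|\hat{\one}\|_\infty$, but Gershgorin only yields $\rho(\L)\leqslant 2\|\hat{\one}\|_\infty$, a factor of two too large. The clean resolution is that the specific normalization by $1/\|\hat{\one}\|_\infty$ in \cref{DSG-NLM} makes $\W_{\texttt{SYM}}$ a genuine sum of two positive semidefinite matrices; the remaining steps are routine bookkeeping with nonnegative matrices together with the elementary fact that a stochastic matrix has spectral radius one.
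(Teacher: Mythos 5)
Your proof is correct and complete. Note that the paper itself does not prove this proposition; it defers entirely to \cite[Sec.~IV]{sreehari2016plug}, so there is no in-paper argument to compare against. Your chain of reasoning is the standard one and each step checks out: $d_i \geqslant \K_{ii} > 0$ gives well-definedness; the row-sum computations give stochasticity of both \cref{NLM} and \cref{DSG-NLM}; the similarity $\W_{\texttt{NLM}} = \D^{-1/2} \N \D^{1/2}$ with $\N := \D^{-1/2}\K\D^{-1/2} \succeq 0$ combined with $\rho \leqslant \lVert \cdot \rVert_\infty = 1$ for a stochastic matrix pins the spectrum in $[0,1]$; and the off-diagonal zero patterns of both denoisers coincide with that of $\K$, so irreducibility transfers. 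Your key observation for $\W_{\texttt{SYM}}$ --- that the normalization by $1/\lVert \hat{\one} \rVert_\infty$ makes $\diag(\one - \hat{\one}/\lVert\hat{\one}\rVert_\infty)$ entrywise nonnegative, hence $\W_{\texttt{SYM}}$ a sum of two positive semidefinite matrices --- is exactly the point of the \texttt{DSG-NLM} construction in \cite{sreehari2016plug}, and your aside explaining why the naive Laplacian--Gershgorin route loses a factor of two is accurate and shows you identified the genuine obstacle. The only cosmetic remark: the strict inequality $1 - c\hat{\one}_i < 1$ you mention is not needed anywhere; nonnegativity suffices.
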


In particular, \cref{kernel} satisfies the assumptions in \Cref{propertiesW}. The details can be found in \cite[Sec.~IV]{sreehari2016plug}, which also has the proof of the assertions in \Cref{propertiesW}.

We next verify \cref{RNP} for denoisers \cref{NLM} and \cref{DSG-NLM}, when $\A$ corresponds to inpainting, deblurring, or superresolution. We assume that at least one pixel is sampled for inpainting, and the lowpass blur is nonnegative (and nonzero) for deblurring and superresolution; these technical conditions are typically met in practice \cite{bouman2022foundations}.
\begin{proposition}
    \label{verification-of-rnp}
    Suppose $\W$ is \cref{NLM} or \cref{DSG-NLM}, and the forward operator $\A$ corresponds to inpainting, deblurring, or superresolution. Then \cref{RNP} holds.
\end{proposition}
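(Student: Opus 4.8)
The plan is to reduce \cref{RNP} to the single scalar condition $\A\one\neq\ze$ and then verify this condition model by model. The spectral half of \cref{RNP} is already in hand: \Cref{propertiesW} gives $\sigma(\W)\subset[0,1]$ for $\W$ equal to \cref{NLM} or \cref{DSG-NLM}; for the nonsymmetric denoiser \cref{NLM} one just notes that $\D^{-1}\K\sim\D^{-\frac12}\K\D^{-\frac12}$, so its spectrum is real and lies in $[0,1]$ as well.

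The next step is to identify $\fix(\W)$ exactly. Being stochastic, $\W$ satisfies $\W\one=\one$, so $\one\in\fix(\W)$; being also nonnegative and irreducible (again by \Cref{propertiesW}), the Perron--Frobenius theorem \cite{Horn_Johnson_2012} guarantees that $\rho(\W)=1$ is a \emph{simple} eigenvalue, so the eigenspace $\ker(\W-\I)$ is one-dimensional, i.e. $\fix(\W)=\Span(\one)$. Consequently
\[
  \ker(\A)\cap\fix(\W)=\ker(\A)\cap\Span(\one)=\{\ze\}\quad\Longleftrightarrow\quad \A\one\neq\ze,
\]
and only the latter remains to be checked.

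Finally I would dispatch the three forward models. For inpainting, $\A$ selects the sampled coordinates, so if at least one pixel is sampled then $\A\one$ has an entry equal to $1$ and hence $\A\one\neq\ze$. For deblurring, $\A$ is convolution with a nonnegative, nonzero lowpass kernel; under the standard periodic (or symmetric) boundary convention $\one$ is an eigenvector and $\A\one=c\,\one$ with $c$ the sum of the kernel weights, which is strictly positive, so $\A\one\neq\ze$. For superresolution, write $\A=\S\H$ with $\H$ such a blur and $\S$ a decimation operator; then $\H\one=c\,\one$ with $c>0$ and $\S(c\,\one)=c\,\S\one\neq\ze$, since decimating the all-ones image returns the all-ones image on the coarse grid.

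I expect the Perron--Frobenius step to be the conceptual crux: irreducibility is precisely what prevents $\fix(\W)$ from being large enough to meet $\ker(\A)$ nontrivially, which is where the structural hypotheses of \Cref{propertiesW} are indispensable. Once $\fix(\W)=\Span(\one)$ is established, the per-model arguments are routine; the only mild care needed is to fix the boundary convention for the blur so that $\one$ is genuinely an eigenvector of the convolution operator.
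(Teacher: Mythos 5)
Your proposal is correct and follows essentially the same route as the paper: invoke \Cref{propertiesW} for the spectral condition, use irreducibility and the Perron--Frobenius theorem to conclude $\fix(\W)=\Span(\one)$, and then reduce the kernel condition to checking $\A\one\neq\ze$ for each forward model. Your write-up merely spells out a few steps (the similarity argument for the nonsymmetric case and the per-model verification of $\A\one\neq\ze$) that the paper leaves implicit.
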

\begin{proof}
    From \Cref{propertiesW}, we have $\rho(\W_{\texttt{NLM}})=1$ and $\one \in \fix(\W_{\texttt{NLM}})$. Moreover, since $\W_{\texttt{NLM}}$ is irreducible, we can conclude from the Perron-Frobenius theorem \cite{Horn_Johnson_2012} that $\fix(\W_{\texttt{NLM}}) = \Span(\one)$. Thus, we are done if we can show that $\A \one  \neq \ze$. Indeed, it is clear that $\A \one  \neq \ze$ for inpainting and deblurring. Moreover, since $\A$ is a lowpass blur followed by downsampling in superresolution, we cannot have $\A \one  = \ze$. A similar argument also goes for $\W_{\texttt{SYM}}$.
\end{proof}

\color{black}
  \section{Convergence Analysis}
  \label{sec:analysis}

  It was noted in prior works that we can establish convergence of \texttt{FISTA} for different step size rules \cite{chambolle2015convergence,lorenz2015inertial}. This is also the case for \texttt{PnP-FISTA} and \texttt{RED-APG}. In particular, instead of  \texttt{PnP-FISTA}, we will work with  \Cref{alg:alpha-pnp-fista}, where we have replaced the  momentum parameter $(t_k-1)/t_{k+1}$ in \cref{tk} by $\alpha_k$.   Similarly, instead of  \texttt{RED-APG}, we will work with \Cref{alg:alpha-red-apg}. 

  \begin{algorithm}
    \caption{$\alpha$-\texttt{PnP-FISTA}}
    \label{alg:alpha-pnp-fista}
    \begin{algorithmic}[1]
      \STATE{\textbf{input}:  loss $f$,  denoiser $\W$, $\x_0$, $\{\alpha_k\}_{k \geqslant 1}$ and $\lambda > 0, \, \gamma > 0$.}
      \STATE{\textbf{set}: $ \y_1=\x_0$.}
      \FOR {$k \geqslant 1$}
      \STATE{$\x_{k} =\W  (\y_k - \gamma \nabla \! f(\y_k))$.} \label{pnp:ytox}
      \STATE{$\y_{k+1} = \x_{k} +\alpha_{k} \, (\x_{k}-\x_{k-1})$.} \label{pnp:xtoy}
      \ENDFOR
    \end{algorithmic}
  \end{algorithm}

  \begin{algorithm}
    \caption{$\alpha$-\texttt{RED-APG}}
    \label{alg:alpha-red-apg}
    \begin{algorithmic}[1]
      \STATE{\textbf{input}:  loss $f$,  denoiser $\W$, $\v_0$,  $\{\alpha_k\}_{k \geqslant 1}$ and $\lambda > 0, \, L \geqslant 1$.}
      \STATE{\textbf{set}: $\theta=1/L$.}
      \FOR {$k \geqslant 1$}
      \STATE{$\x_{k} =\prox_{(\lambda L)^{-1} \, f } (\v_{k-1})$.} \label{red:vtox}
      \STATE{$\y_{k} = \x_{k} + \alpha_{k} \, (\x_{k}-\x_{k-1})$.}  \label{red:xtoy}
      \STATE{$\v_{k} = \theta \, \W(\y_k)+ (1-\theta) \, \y_k$.}  \label{red:ytov}
      \ENDFOR
    \end{algorithmic}
  \end{algorithm}

  We will analyze the iterate convergence of \Cref{alg:alpha-pnp-fista} and \Cref{alg:alpha-red-apg}. The key idea is to express the evolution of the iterates as a linear dynamical system in higher dimensions.

  \subsection{Preliminaries}  We say that a sequence $\{\x_k\} \subset \Re^n$ converges linearly to $\x^* \in \Re^n$ if there exists a norm $\| \cdot \|$ on $\Re^n$, $K \geqslant 1, \, C >0$, and $\beta \in [0,1)$ such that
  $\|\x_k  - \x^* \| \leqslant C \beta^k$ for all $k \geqslant K$. This asymptotic form of convergence is called ``local'' linear convergence in \cite{johnstone2015lyapunov,tao2016local}. However, since we use the term ``global convergence'' to mean that the sequence converges for any initialization, we will not use ``local'' and refer to this as linear convergence. The notion of linear convergence does not depend on the choice of norm.

We make the following elementary observation regarding the convergence of a time-varying linear dynamical system. 

  \begin{lemma}
    \label{lemma1}
    Suppose we are given $\s \in \Re^d$ and a sequence of linear operators $\{\R_k\}_{k \geqslant 1} \subset \Re^{d \times d}$. For all $k \geqslant 1$, define the affine map $\T_k : \Re^d \to \Re^d, \, \T_k(\z) = \R_k \z + \s$. Suppose there exists $\R_{\infty} \in \Re^{d \times d}$ such that
    
    \begin{enumerate}
      \item $\rho(\R_{\infty}) < 1$,
      \item $\lim_{k \to \infty}  \R_k = \R_{\infty}$, and
      \item $\bigcap_{k=1}^{\infty} \, \fix (\T_k) \neq \O$.
    \end{enumerate}
    
    Then there exists an unique $\z^* \in \Re^d$ such that, for any $\z_0 \in \Re^d$, the sequence $\{\z_k\}_{k \geqslant 0}$ generated via $\z_{k+1} = \T_k (\z_k)$
    converges linearly to $\z^*$.
  \end{lemma}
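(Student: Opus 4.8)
The plan is to first identify the limit and then control the error through the product of the matrices $\R_k$. Pick any $\z^* \in \bigcap_{k \geqslant 1} \fix(\T_k)$, which is nonempty by hypothesis~(3). Then $\R_k \z^* + \s = \z^*$ for every $k$, and letting $k \to \infty$ and using hypothesis~(2) gives $(\I - \R_{\infty}) \z^* = \s$. Since $\rho(\R_{\infty}) < 1$ by hypothesis~(1), the matrix $\I - \R_{\infty}$ is invertible, so $\z^* = (\I - \R_{\infty})^{-1} \s$ is uniquely determined; in particular $\bigcap_{k \geqslant 1} \fix(\T_k) = \{\z^*\}$. This is the candidate limit.

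Next I would pass to the error $\e_k := \z_k - \z^*$. Since each $\T_k$ is affine and fixes $\z^*$, we get $\e_{k+1} = \T_k(\z_k) - \T_k(\z^*) = \R_k(\z_k - \z^*) = \R_k \e_k$, and iterating, $\e_k = \R_{k-1} \R_{k-2} \cdots \R_1 \e_0$ for all $k \geqslant 1$. So the claim reduces to showing that this product of matrices, whose factors converge to $\R_{\infty}$, drives the initial error $\e_0$ to $\ze$ at a geometric rate.

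To obtain the rate, I would fix $\beta$ with $\rho(\R_{\infty}) < \beta < 1$. By a standard fact (see, e.g., \cite{Horn_Johnson_2012}), the induced operator norm of a matrix can be made arbitrarily close to its spectral radius by a suitable choice of vector norm, so there is a norm $\|\cdot\|$ on $\Re^d$ whose induced operator norm satisfies $\|\R_{\infty}\| < \beta$. By hypothesis~(2), there is $K \geqslant 1$ with $\|\R_k\| \leqslant \beta$ for all $k \geqslant K$. Submultiplicativity of the induced norm then gives, for $k \geqslant K$,
\[
  \|\e_k\| = \bigl\| (\R_{k-1} \cdots \R_K)(\R_{K-1} \cdots \R_1 \e_0) \bigr\| \leqslant \beta^{\,k-K} \, \|\e_K\| ,
\]
hence $\|\e_k\| \leqslant C \beta^k$ for all $k \geqslant K$ with $C := \beta^{-K} \|\e_K\| + 1 > 0$. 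Thus $\z_k \to \z^*$ linearly in $\|\cdot\|$, and since linear convergence is independent of the norm (as noted above), the lemma follows.

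I expect the only step needing real care to be this change of norm: replacing the (possibly non-contractive) Euclidean operator norm of $\R_{\infty}$ by an equivalent norm in which $\R_{\infty}$ — and hence the tail $\{\R_k\}_{k \geqslant K}$ — is a strict contraction. The rest is bookkeeping with the telescoped product $\R_{k-1} \cdots \R_1$ and taking limits in the fixed-point identity.
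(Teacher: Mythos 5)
Your proof is correct and follows essentially the same route as the paper's: choose a vector norm adapted to $\rho(\R_{\infty})$ so that the tail operators $\R_k$ become contractions with factor $\beta$, use the common fixed point to reduce to the error recursion $\e_{k+1}=\R_k\e_k$, and obtain uniqueness from the invertibility of $\I-\R_{\infty}$. The only difference is cosmetic (you establish uniqueness first and telescope the product explicitly, while the paper iterates the one-step contraction), so nothing further is needed.
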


  \begin{proof}
   Fix $\beta <1 $ such that $\rho(\R_{\infty}) < \beta$. {By \cite[Lemma~5.6.10]{Horn_Johnson_2012}, there exists a nonsingular matrix $\Q\in\Re^{d\times d}$ such that, if we define the norm $\| \x \|_{\beta}:= \| \Q\x \|_1$ on $\Re^d$, then}
    \begin{equation*}
      \rho(\R_{\infty}) \leqslant \lvvvert  \R_{\infty} \rvvvert _{\beta} < \beta,
    \end{equation*}
    where  $\lvvvert \cdot \rvvvert_{\beta}$ is the operator norm induced by $\| \cdot \|_{\beta}$. 
   
    Now, since $\lvvvert \R_k - \R_{\infty} \rvvvert_{\beta} \to 0$, we can find $K \geqslant 1$ such that $\lvvvert \R_k \rvvvert_{\beta} \leqslant  \beta$ for all $k \geqslant K$.

    Let $\z^* \in \bigcap_{k=1}^{\infty}  \fix (\T_k)$, i.e., $\z^*=\T_k (\z^*)$ for all $k \geqslant 1$. We can write
    \begin{equation*}
      \z_{k+1} - \z^* = \T_k (\z_{k}) - \T_k(\z^*) = \R_k(\z_{k} - \z^*).
    \end{equation*}
    Hence, for all $k \geqslant K$, we have
    \begin{equation*}
      \| \z_{k+1} - \z^* \|_{\beta} \leqslant \lvvvert  \R_k  \rvvvert _{\beta} \, \| \z_{k} - \z^*  \|_{\beta} \leqslant \beta \| \z_{k} - \z^*  \|_{\beta}.
    \end{equation*}
    Iterating this, we get
    \begin{equation*}
      \forall \, k \geqslant K: \qquad \| \z_{k+1} - \z^* \|_{\beta} \leqslant  \| \z_K - \z^*  \|_{\beta} \cdot \beta^{k+1-K} = C \beta^{k+1},
    \end{equation*}
    where $C$ does not depend on $k$. Thus, we have shown that $\{\z_k\}$ converges linearly to $\z^*$ for any initialization $\z_0$.

    Finally, note that $\bigcap_{k=1}^{\infty}  \fix (\T_k)$ must be a singleton. This is because $\z^*=\R_k \z^*+\s$
    for all $k \geqslant 1$. Consequently, letting $k \to \infty$, we get
    \begin{equation}
      \label{unique}
      (\I_d-\R_{\infty} )\z^*=\s
    \end{equation}
    As $\rho (\R_{\infty} ) <1$, $\I_d-\R_{\infty} $ is nonsingular, and hence  \cref{unique} has at most one solution.
  \end{proof}

We record a result that is required later in the analysis.

  \begin{proposition}
    \label{rholt1}
    Let $\P \in \Re^{n \times n}$  be such that $\sigma(\P) \subset [0,1)$. If $\R \in \Re^{2n \times 2n}$ is defined to be
    \begin{equation*}
      \R=
      \begin{pmatrix}
        2 \P & - \P\\
        \I_n & \phantom{-} \ze_n
      \end{pmatrix},
    \end{equation*}
    then $\rho(\R) < 1$.
  \end{proposition}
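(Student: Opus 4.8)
The plan is to determine the spectrum of $\R$ exactly in terms of the eigenvalues of $\P$ and then bound each eigenvalue. Writing the characteristic matrix in block form,
\begin{equation*}
  \mu\I_{2n} - \R =
  \begin{pmatrix}
    \mu\I_n - 2\P & \P \\
    -\I_n & \mu\I_n
  \end{pmatrix},
\end{equation*}
the two lower blocks $-\I_n$ and $\mu\I_n$ commute, so the determinant of this block matrix factors and
\begin{equation*}
  \det(\mu\I_{2n} - \R) = \det\!\big( (\mu\I_n - 2\P)(\mu\I_n) - \P(-\I_n) \big) = \det\!\big( \mu^2\I_n - (2\mu - 1)\P \big).
\end{equation*}
(Equivalently, for $\mu \neq 0$ one may expand via the Schur complement of the invertible block $\mu\I_n$ and then observe that the two polynomials in $\mu$ thus obtained agree for all $\mu$.) Bringing $\P$ to upper-triangular form by a Schur decomposition $\U^*\P\U$ with diagonal entries $p_1, \dots, p_n$ — the eigenvalues of $\P$, which lie in $[0,1)$ by hypothesis — the matrix $\mu^2\I_n - (2\mu-1)\P$ becomes triangular with diagonal $\mu^2 - (2\mu-1)p_i$, so
\begin{equation*}
  \det(\mu\I_{2n} - \R) = \prod_{i=1}^n \big( \mu^2 - 2 p_i \mu + p_i \big).
\end{equation*}
Hence $\mu \in \sigma(\R)$ precisely when $\mu^2 - 2 p_i \mu + p_i = 0$ for some $i$.

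It then remains to show that every root of $\mu^2 - 2 p \mu + p$ has modulus strictly less than $1$ whenever $p \in [0,1)$. The roots are $\mu = p \pm \sqrt{p^2 - p}$; since $p^2 - p = p(p - 1) \leqslant 0$ on $[0,1)$, either $p = 0$ and both roots vanish, or the roots form a complex-conjugate pair whose product (the constant term) equals $p$, so that $\lvert \mu \rvert^2 = p$. In every case $\lvert \mu \rvert = \sqrt{p} < 1$, and taking the maximum over $i$ gives $\rho(\R) = \max_i \sqrt{p_i} < 1$. Note that the strict bound $p_i < 1$ is essential here: at $p = 1$ the quadratic acquires the double root $\mu = 1$.

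The only point requiring care is the block-determinant step, in particular the value $\mu = 0$, where the Schur-complement expansion is not directly available; this is dealt with either by appealing to the commuting-blocks determinant identity (which carries no invertibility requirement) or by the polynomial-continuation remark above. A determinant-free alternative that I would also record: conjugating $\R$ by $\diag(\U, \U)$, where $\U$ triangularizes $\P$, and then interleaving the two coordinate blocks turns $\R$ into a block upper-triangular matrix whose $2 \times 2$ diagonal blocks are $\left(\begin{smallmatrix} 2 p_i & -p_i \\ 1 & 0 \end{smallmatrix}\right)$; the spectrum of $\R$ is the union of the spectra of these blocks, each being the root set of the same quadratic $\mu^2 - 2 p_i \mu + p_i$, and the estimate follows as before.
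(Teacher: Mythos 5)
Your proof is correct. It reaches the same key fact as the paper --- that every eigenvalue $\lambda$ of $\R$ satisfies $\lambda^2 - 2\mu\lambda + \mu = 0$ for some $\mu \in \sigma(\P)$, whence $\lvert \lambda \rvert = \sqrt{\mu} < 1$ --- but by a different route. The paper argues directly on eigenvectors: it takes a nonzero eigenvalue $\lambda$ with eigenvector $(\z_1,\z_2)$, eliminates $\z_2$ via $\z_1 = \lambda \z_2$, and deduces $\P\z_1 = \frac{\lambda^2}{2\lambda-1}\z_1$, which requires the small side checks that $\lambda \neq 1/2$ and $\z_1 \neq \ze$ (and handles $\lambda = 0$ trivially). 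You instead compute the full characteristic polynomial via the commuting-blocks determinant identity and a Schur triangularization of $\P$, obtaining $\det(\mu\I_{2n}-\R) = \prod_i (\mu^2 - 2p_i\mu + p_i)$. Your version is slightly heavier on machinery (the block-determinant identity, whose applicability at $\mu=0$ you rightly flag and resolve) but buys more: it gives the spectrum of $\R$ exactly with multiplicities and the identity $\rho(\R) = \sqrt{\rho(\P)}$, which is of independent interest since $\rho(\R_\infty)$ governs the asymptotic convergence rate in \cref{linconv} and is computed numerically in \Cref{tab:1}. The paper's eigenvector argument is more elementary and self-contained. Your closing remark that strictness of $p_i < 1$ is essential (double root at $\mu=1$ when $p=1$) is a worthwhile observation that the paper does not make explicit.
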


  \begin{proof}
    Let $\lambda \in \mathbb{C}$ be a nonzero eigenvalue of $\R$. By definition, there exists $\z_1, \z_2 \in \Re^n$ such that $(\z_1,\z_2) \neq \ze$, and
    \begin{subequations}
      \begin{align}
        2\P \z_1 - \P \z_2 &= \lambda \z_1, \label{e1} \\
        \z_1&= \lambda \z_2. \label{e2}
      \end{align}
    \end{subequations}

    We claim that $\lambda \neq 1/2$ and $\z_1=\ze $. Indeed, if $\lambda = 1/2$, we would obtain $\z_1=\z_2=\ze $, contrary to our assumption. Similarly, $\z_1=\ze $ would imply (as $\lambda \neq 0$) that $\z=\ze$.

    Substituting \cref{e2} in \cref{e1}, we have
    \begin{equation}
      \label{eqn:eigvec}
      \P \z_1 = \mu \z_1, \qquad \mu:= \frac{\lambda^2}{2 \lambda -1}.
    \end{equation}
    Since $\z_1 \neq \ze$, it follows from \cref{eqn:eigvec} that $\mu \in \sigma(\P)$. Moreover, we have 
    \begin{equation*}
      \lambda^2 - 2\mu \, \lambda + \mu= 0.
    \end{equation*}
    As $\mu \in [0,1)$ by assumption, we have $\lambda=\mu \pm i \sqrt{\mu - \mu^2}$. In either case, $\lvert \lambda \rvert = \sqrt{\smash[b] { \mu}} < 1$.
  \end{proof}

We next prove a result about the spectrum of a particular operator involving the forward model $\A$ and the denoising operator $\W$.

  \begin{lemma}
    \label{lemma2}
    For $\W \in \Sy^n$ and $\A \in  \Re^{m \times n}$ satisfying \cref{RNP}, define
    \begin{equation}
      \label{defP}
      \P_{\gamma} = \W (\I_n - \gamma \A^\top \! \A) \quad \mbox{and} \quad \P_{\mu, \theta} =  (\I_n +\mu \A^\top \! \A)^{-1} \, (\theta \, \W + (1-\theta) \I).
    \end{equation}
    Then $\sigma(\P_{\gamma} ) \subset [0,1)$ for all $\gamma \in (0,1/\lambda_{\max}(\A^\top \! \A))$ and $\sigma(\P_{\mu,\theta}) \subset [0,1)$ for all $\theta \in [0,1], \, \mu>0$.
  \end{lemma}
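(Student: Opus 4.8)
Both operators in \cref{defP} are products of a symmetric matrix with eigenvalues in $[0,1]$ and a symmetric positive-definite matrix that ``shrinks'' along $\mathrm{ran}(\A^\top\A)$, so the natural plan is to pass to a symmetrized (similar) operator and analyze its Rayleigh quotient. Concretely, for $\P_\gamma = \W(\I_n - \gamma\A^\top\!\A)$, I would first record that $\M := \I_n - \gamma\A^\top\!\A$ is symmetric with $\sigma(\M) \subset [0,1)$: indeed $\lambda_{\max}(\gamma\A^\top\!\A) < 1$ for $\gamma \in (0,1/\lambda_{\max}(\A^\top\!\A))$, and $\gamma\A^\top\!\A \succeq 0$, so $\M \succ -\,?$ — more precisely $\sigma(\M) = \{1 - \gamma\lambda : \lambda \in \sigma(\A^\top\!\A)\} \subset (0,1]$, with the eigenvalue $1$ occurring exactly on $\ker(\A)$.

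**Key steps.** The product $\W\M$ of two symmetric matrices need not be symmetric, but since $\W \succeq 0$ we may write $\W = \W^{1/2}\W^{1/2}$ and observe $\W\M = \W^{1/2}(\W^{1/2}\M\W^{1/2})\W^{-1/2}$ is similar to the symmetric matrix $\S := \W^{1/2}\M\W^{1/2}$, provided $\W$ is invertible; if $\W$ is singular one restricts to $\mathrm{ran}(\W)$ (the part of $\P_\gamma$ acting on $\ker(\W)$ is zero and contributes only the eigenvalue $0 \in [0,1)$), so in all cases $\sigma(\P_\gamma) \subset \sigma(\S) \cup \{0\}$. Now $\S$ is symmetric, hence has real spectrum, and for any unit vector $\u$,
\begin{equation*}
  \u^\top \S \u = (\W^{1/2}\u)^\top \M (\W^{1/2}\u) \leqslant \lambda_{\max}(\M)\,\|\W^{1/2}\u\|_2^2 \leqslant 1 \cdot \|\W^{1/2}\|^2 \leqslant 1,
\end{equation*}
and $\u^\top\S\u \geqslant \lambda_{\min}(\M)\,\|\W^{1/2}\u\|_2^2 \geqslant 0$ since $\M \succeq 0$ and $\W \succeq 0$; this already gives $\sigma(\P_\gamma) \subset [0,1]$. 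The remaining point — ruling out the eigenvalue $1$ — is where \cref{RNP} enters: if $\S\u = \u$ with $\|\u\|_2 = 1$, then equality in the chain above forces $\|\W^{1/2}\u\|_2 = 1 = \|\u\|_2$ and $\M(\W^{1/2}\u) = \W^{1/2}\u$; since $\|\W^{1/2}\| \leqslant 1$, $\|\W^{1/2}\u\|_2 = \|\u\|_2$ forces $\W^{1/2}\u = \u$, hence $\W\u = \u$, i.e. $\u \in \fix(\W)$; and $\M\u = \u$ means $\A^\top\!\A\,\u = \ze$, i.e. $\u \in \ker(\A)$. Thus $\u \in \ker(\A)\cap\fix(\W) = \{\ze\}$, contradicting $\|\u\|_2 = 1$. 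So $1 \notin \sigma(\P_\gamma)$ and $\sigma(\P_\gamma) \subset [0,1)$.

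**The $\P_{\mu,\theta}$ case.** The argument is entirely parallel. Write $\N := (\I_n + \mu\A^\top\!\A)^{-1} \succ 0$ and $\B := \theta\W + (1-\theta)\I_n$, which for $\theta \in [0,1]$ is a convex combination of $\W$ and $\I_n$, hence symmetric with $\sigma(\B) = \{\theta w + 1 - \theta : w \in \sigma(\W)\} \subset [0,1]$, with eigenvalue $1$ occurring exactly on $\fix(\W)$ (when $\theta > 0$; the case $\theta = 0$ gives $\P_{\mu,0} = \N$ whose eigenvalues $(1+\mu\lambda)^{-1}$ lie in $(0,1]$, and the value $1$ is attained on $\ker(\A)$, so \cref{RNP} — which forces $\ker(\A)\cap\fix(\W)=\{\ze\}$ but allows $\ker(\A)\neq\{\ze\}$ — does not immediately kill it; I will need to check whether the paper intends $\theta \in (0,1]$ here, or handle $\theta=0$ separately noting that then $\P_{\mu,0}$ genuinely can have spectral radius $1$ unless $\A$ is injective. \emph{This edge case is the one subtlety to watch.}). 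For $\theta > 0$: since $\N \succ 0$, $\P_{\mu,\theta} = \N\B$ is similar to $\N^{1/2}\B\N^{1/2} =: \S'$, which is symmetric; for a unit eigenvector $\u$ of $\S'$ with eigenvalue $\lambda$, $\lambda = \u^\top\S'\u = (\N^{1/2}\u)^\top\B(\N^{1/2}\u) \in [0, \|\N^{1/2}\u\|_2^2] \subseteq [0,1]$ using $\|\N^{1/2}\| \leqslant 1$ and $0 \preceq \B \preceq \I_n$. Equality $\lambda = 1$ forces $\|\N^{1/2}\u\|_2 = 1$, hence (as $\|\N^{1/2}\|\leqslant 1$) $\N^{1/2}\u = \u$, so $\A^\top\!\A\,\u = \ze$, i.e. $\u\in\ker(\A)$; and then $\B\u = \u$, which for $\theta > 0$ gives $\W\u = \u$, i.e. $\u\in\fix(\W)$; again $\u\in\ker(\A)\cap\fix(\W)=\{\ze\}$, a contradiction. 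Hence $\sigma(\P_{\mu,\theta})\subset[0,1)$.

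**Main obstacle.** The routine work is the Rayleigh-quotient bounds; the real care is needed (i) when $\W$ is singular, where one must argue that $\ker(\W)$ contributes only the eigenvalue $0$ and restrict the symmetrization to $\mathrm{ran}(\W)$ (alternatively, perturb $\W$ to $\W + \epsilon\I$ and pass to the limit, but the restriction argument is cleaner), and (ii) the boundary value $\theta = 0$ in the second operator, which I would flag and treat as a trivial separate case (or note that the claim as stated may tacitly assume $\theta\in(0,1]$, consistent with $\theta = 1/L$, $L\geqslant 1$ in \Cref{alg:alpha-red-apg}). Everything else follows from symmetry of the similar operator plus the single structural input \cref{RNP}.
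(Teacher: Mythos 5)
Your proof is correct and follows essentially the same route as the paper's: symmetrize the product by a similarity transformation to conclude the spectrum is real and nonnegative, then rule out the eigenvalue $1$ by showing it would force a nonzero vector into $\ker(\A)\cap\fix(\W)$. The one technical difference is the choice of symmetrizing factor: you conjugate by $\W^{1/2}$, which obliges you to patch the case of singular $\W$ (by restricting to $\mathrm{ran}(\W)$, or more cleanly by noting $\sigma(\A\B)=\sigma(\B\A)$ for square matrices), whereas the paper conjugates by $\G_\gamma^{1/2}$ with $\G_\gamma=\I_n-\gamma\A^\top\!\A$, which is invertible on the stated range of $\gamma$, so no patch is needed; the paper then excludes the value $1$ by bounding the operator $2$-norm with a case split on $\x\in\ker(\A)$ versus $\x\notin\ker(\A)$ rather than by your Rayleigh-quotient equality analysis --- the two mechanisms are interchangeable here. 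Your flag on $\theta=0$ is a genuine catch, not a quibble: there $\P_{\mu,0}=(\I_n+\mu\A^\top\!\A)^{-1}$ fixes every element of $\ker(\A)$, so $1\in\sigma(\P_{\mu,0})$ whenever $\ker(\A)\neq\{\ze\}$ (as in inpainting or superresolution), and \cref{RNP} does not exclude this; the lemma as stated is therefore false at $\theta=0$, and the paper's proof silently uses $\fix(\W_\theta)=\fix(\W)$, which holds only for $\theta>0$. The result and everything downstream survive because the algorithm only ever uses $\theta=1/L\in(0,1]$, but the hypothesis should read $\theta\in(0,1]$.
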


  We need the following observation to prove \Cref{lemma2}. 

  \begin{proposition}
    \label{prop2}
    Let $\M \in \Sy^n$ and $\sigma(\M) \subset [0,1]$. Then $\|\M\x\|=\|\x\|$ if and only if $\M\x=\x$.
  \end{proposition}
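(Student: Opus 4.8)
The reverse implication is immediate: $\M\x = \x$ trivially gives $\|\M\x\| = \|\x\|$. For the forward implication, the plan is to invoke the spectral theorem, since $\M \in \Sy^n$ is symmetric.

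Concretely, I would fix an orthonormal basis $\v_1,\dots,\v_n$ of $\Re^n$ consisting of eigenvectors of $\M$, with $\M\v_i = \lambda_i\v_i$ and $\lambda_i \in \sigma(\M) \subset [0,1]$. Expanding $\x = \sum_{i=1}^n c_i\v_i$, orthonormality yields
\[
  \|\x\|^2 - \|\M\x\|^2 \;=\; \sum_{i=1}^n \bigl(1 - \lambda_i^2\bigr)\, c_i^2 .
\]
Every summand is nonnegative because $\lambda_i \in [0,1]$ forces $1 - \lambda_i^2 \geqslant 0$, and it equals $0$ precisely when $\lambda_i = 1$. The hypothesis $\|\M\x\| = \|\x\|$ makes the whole sum vanish, hence each summand vanishes; that is, $c_i \neq 0$ implies $\lambda_i = 1$. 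Consequently $\M\x = \sum_i \lambda_i c_i\v_i = \sum_i c_i\v_i = \x$, as claimed.

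An equivalent coordinate-free route, which I could use instead, is to note that $\sigma(\M)\subset[0,1]$ makes $\I_n - \M^2$ symmetric positive semidefinite, and since $\M = \M^\top$ one has $\|\x\|^2 - \|\M\x\|^2 = \langle (\I_n - \M^2)\x,\x\rangle$. A positive semidefinite operator for which this quadratic form vanishes must annihilate $\x$, so $\M^2\x = \x$; and because no eigenvalue of $\M$ is negative, the eigenspace of $\M^2$ for the eigenvalue $1$ coincides with that of $\M$, giving $\M\x = \x$. This last deduction is the only point at which $\sigma(\M)\subset[0,1]$ is genuinely needed (rather than just $\sigma(\M)\subset[-1,1]$); the argument is otherwise entirely elementary, so I do not expect any real obstacle.
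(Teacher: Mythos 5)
Your proof is correct. The paper actually states \Cref{prop2} without proof (it is offered as an elementary observation needed for \Cref{lemma2}), so there is no argument of record to compare against; your spectral-decomposition computation $\|\x\|^2-\|\M\x\|^2=\sum_i(1-\lambda_i^2)c_i^2$ is precisely the standard justification the authors are implicitly relying on, and your closing remark correctly identifies why the hypothesis $\sigma(\M)\subset[0,1]$, rather than merely $\sigma(\M)\subset[-1,1]$, is essential (an eigenvalue $-1$ would give $\|\M\x\|=\|\x\|$ with $\M\x=-\x\neq\x$). The coordinate-free variant via positive semidefiniteness of $\I_n-\M^2$ is equally valid; either one suffices.
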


  \begin{proof}[Proof of \Cref{lemma2}]
    Defining {$\G_{\gamma} = \I_n -\gamma\A^\top\!\A \in \Sy^n$} we have that  $\sigma(\G_{\gamma}) \subset (0,1]$  for $0 < \gamma < 1/\lambda_{\max}(\A^\top \! \A)$. In particular, as $\G_{\gamma}$ is nonsingular, we can write
    \begin{equation*}
      \label{sim}
      \P_{\gamma} = \G_{\gamma}^{-\frac{1}{2}} \, \big(\G_{\gamma}^{\frac{1}{2}}\W \G_{\gamma}^{\frac{1}{2}} \big) \, \G_{\gamma}^{\frac{1}{2}} \sim \G_{\gamma}^{\frac{1}{2}}\W \G_{\gamma}^{\frac{1}{2}}.
    \end{equation*}
    Thus, we have $\sigma(\P_{\gamma}) \subset [0,\infty)$ since $\W$ is positive semidefinite. Using \Cref{prop2}, we will establish the stronger result that $\rho(\P_{\gamma}) < 1$. In fact, we know that\footnote{This part is similar to the proof of \cite[Lemma~1]{ACK2023-contractivity}; however, the present argument is simpler as our assumption on the spectrum is weaker. Moreover, the analysis is necessary for the other operator $ \P_{\mu, \theta}$.}
    \begin{equation*}
      \rho(\P_{\gamma}) \leqslant \max_{ \|\x \|_2=1} \, \|\P_{\gamma}\x \|_2.
    \end{equation*}
    Thus, it suffices to show that $\|\P_{\gamma}\x \|_2  <1 $ for all $\|\x\|_2=1$.

    Note that $\|\W\|_2 \leqslant 1$ and $\|\G_{\gamma}\|_2 \leqslant 1$. Let $\x \in \Re^n$ such that $\|\x\|_2=1$. We have two possibilities: (i) $ \x \in \ker(\A) $, and (ii) $ \x  \notin  \ker(\A)$. For (i), we must have $\x \notin \fix (\W)$, since $\ker(\A) \, \cap \, \fix(\W) = \{ \ze\}$. Hence, from \Cref{prop2}, we get $\|\W\x\|_2 < \|\x\|_2$. Moreover, since $\G_{\gamma} \x=\x$,  
    \begin{equation*}
      \|\P_{\gamma} \x\|_2 = \|\W (\G_{\gamma} \x)\|_2 \leqslant \|\W  \x\|_2 < \|\x\|_2.
    \end{equation*}
    For (ii), we have $\G_{\gamma} \x \neq \x$; hence $\|\G_{\gamma} \x \|_2 < \|\x\|_2$ by \Cref{prop2}. Again, 
    \begin{equation*}
      \|\P_{\gamma} \x\|_2 = \|\W (\G_{\gamma} \x )\|_2 \leqslant  \|\G_{\gamma} \x\|_2< \|\x\|_2.
    \end{equation*}
    Thus, we have shown that $\sigma(\P_{\gamma} ) \subset [0,1)$.

    The other result $\sigma(\P_{\mu,\theta}) \subset [0,1)$ follows similarly. In particular, we can write $\P_{\mu,\theta} = \H \W_{\theta}$, where $\H:=(\I+\mu \A^\top \! \A)^{-1}$ and $\W_\theta :=\theta \, \W + (1-\theta) \I$. Since $\mu >0$ and $\theta \in [0,1]$, $\H$ and $\W_{\theta}$ are symmetric with eigenvalues in $[0,1]$. Moreover, it is clear that $\|\W_{\theta}\|_2 \leqslant 1$ and $\|\H \|_2 \leqslant 1$. Also, since $\fix (\W_{\theta})  = \fix (\W)$ and $ \fix(\H) = \ker(\A)$, we have
    \begin{equation*}
      \fix (\W_{\theta}) \, \cap \, \fix(\H)=\{\ze\}.
    \end{equation*}
    The rest of the proof is identical to that for $\P_{\gamma}$.
  \end{proof}

  \subsection{Linear convergence} We now discuss the main result of the paper. The idea is to express \Cref{alg:alpha-pnp-fista} and \Cref{alg:alpha-red-apg} as a dynamical system in $\Re^{2n}$. The transition operator is time-varying but is shown to converge to an operator with a spectral radius strictly less than $1$.

  \begin{theorem}
    \label{thm:linconvg}
    Let \cref{qloss} be the loss in \Cref{alg:alpha-pnp-fista} and \Cref{alg:alpha-red-apg}, $ \lim_{k \to \infty} \, \alpha_k =1$, and
 that  $\W$ and $\A$ satisfy \cref{RNP}. Then the iterates of \Cref{alg:alpha-pnp-fista} exhibit global linear convergence for any $\gamma \in (0,1/\lambda_{\max}(\A^\top \! \A))$. On the other hand, the iterates of \Cref{alg:alpha-red-apg}  exhibit global linear convergence for any $L \geqslant 1$.
  \end{theorem}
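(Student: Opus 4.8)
The plan is to substitute the quadratic loss \cref{qloss} into each algorithm, eliminate the auxiliary sequences, and in both cases obtain an affine recursion $\w_{k+1}=\R_k\w_k+\s$ on a lifted state $\w_k\in\Re^{2n}$ whose transition operators $\R_k$ converge to an operator of the block form treated in \Cref{rholt1}. The spectral input $\sigma(\cdot)\subset[0,1)$ needed to invoke \Cref{rholt1} will be supplied by \Cref{lemma2}, and linear convergence of $\w_k$ (hence of all iterates) will follow from \Cref{lemma1}. It is precisely the quadratic form of $f$ that makes this dynamical-system viewpoint \emph{exact}: for a general smooth $f$ the map $\y_k\mapsto\x_k$ would not be affine.

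For $\alpha$-\texttt{PnP-FISTA}, using $\nabla\!f(\x)=\A^\top\!\A\x-\A^\top\b$, the denoising step reads $\x_k=\W(\I_n-\gamma\A^\top\!\A)\y_k+\gamma\W\A^\top\b=\P_\gamma\y_k+\c$, with $\P_\gamma$ as in \cref{defP} and $\c:=\gamma\W\A^\top\b$. Substituting the momentum step $\y_{k+1}=\x_k+\alpha_k(\x_k-\x_{k-1})$ gives, for $k\geqslant1$,
\begin{equation*}
  \x_{k+1}=\P_\gamma\y_{k+1}+\c=(1+\alpha_k)\,\P_\gamma\x_k-\alpha_k\,\P_\gamma\x_{k-1}+\c .
\end{equation*}
With $\w_k:=(\x_k,\x_{k-1})$ this becomes $\w_{k+1}=\R_k\w_k+\s$, where
\begin{equation*}
  \R_k=\begin{pmatrix}(1+\alpha_k)\P_\gamma & -\alpha_k\P_\gamma\\ \I_n & \ze_n\end{pmatrix},\qquad \s=\begin{pmatrix}\c\\ \ze\end{pmatrix},
\end{equation*}
and $\w_1=(\P_\gamma\x_0+\c,\ \x_0)$ is fixed by the initialization. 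I would then verify the three hypotheses of \Cref{lemma1} for $\T_k(\z)=\R_k\z+\s$ and $\R_\infty$ equal to the matrix $\R$ of \Cref{rholt1} with $\P=\P_\gamma$. Hypothesis (2) is immediate from $\alpha_k\to1$. For (1), \Cref{lemma2} gives $\sigma(\P_\gamma)\subset[0,1)$ whenever $\gamma\in(0,1/\lambda_{\max}(\A^\top\!\A))$, so \Cref{rholt1} yields $\rho(\R_\infty)<1$. For (3), a point $(\u,\v)$ is fixed by $\T_k$ iff $\v=\u$ and $(\I_n-\P_\gamma)\u=\c$; since $\rho(\P_\gamma)<1$ makes $\I_n-\P_\gamma$ nonsingular this forces $\u=(\I_n-\P_\gamma)^{-1}\c=:\x^*$, a value independent of $k$, so $\bigcap_{k\geqslant1}\fix(\T_k)=\{(\x^*,\x^*)\}\neq\emptyset$. \Cref{lemma1} then gives $\w_k\to(\x^*,\x^*)$ linearly for every initialization; projecting onto the first block shows $\x_k\to\x^*$ linearly, and since $\y_k=\x_{k-1}+\alpha_{k-1}(\x_{k-1}-\x_{k-2})$ with $\{\alpha_k\}$ bounded, $\y_k$ converges linearly as well.

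For $\alpha$-\texttt{RED-APG} the argument is identical after one preliminary computation: the proximal step on \cref{qloss} is $\prox_{(\lambda L)^{-1}f}(\v)=(\I_n+\mu\A^\top\!\A)^{-1}(\v+\mu\A^\top\b)$ with $\mu:=(\lambda L)^{-1}$, and the relaxation step is $\v_k=\W_\theta\y_k$ with $\W_\theta:=\theta\W+(1-\theta)\I$, $\theta=1/L$. Composing the three updates and eliminating $\y_k,\v_k$ gives, for $k\geqslant1$,
\begin{equation*}
  \x_{k+1}=(1+\alpha_k)\,\P_{\mu,\theta}\x_k-\alpha_k\,\P_{\mu,\theta}\x_{k-1}+\d,\qquad \d:=\mu(\I_n+\mu\A^\top\!\A)^{-1}\A^\top\b,
\end{equation*}
with $\P_{\mu,\theta}$ as in \cref{defP}, which is again of the form $\w_{k+1}=\R_k\w_k+\s$. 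Since $L\geqslant1$ gives $\theta\in[0,1]$ and $\mu>0$, \Cref{lemma2} yields $\sigma(\P_{\mu,\theta})\subset[0,1)$, and the verification of the hypotheses of \Cref{lemma1} (now with $\x^*=(\I_n-\P_{\mu,\theta})^{-1}\d$) is word-for-word as above.

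I expect the only step requiring genuine thought to be hypothesis (3) of \Cref{lemma1}: one must observe that the fixed-point equation $(\I_n-\P)\x^*=\text{const}$ is free of $\alpha_k$, so a single point is a fixed point of \emph{every} $\T_k$ and the intersection is nonempty. Everything else is bookkeeping — tracking the affine term $\s$ through the momentum substitution, and passing from the lifted state back to $\x_k$ (and $\y_k$, $\v_k$), which is automatic since each is a bounded affine image of $\w_k$ and linear convergence is norm-independent — together with the direct appeals to \Cref{lemma2}, \Cref{rholt1}, and \Cref{lemma1}.
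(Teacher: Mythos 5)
Your proposal is correct and follows essentially the same route as the paper: rewrite the denoising/proximal step as an affine map using the quadratic loss, lift to the state $(\x_k,\x_{k-1})\in\Re^{2n}$, and verify the three hypotheses of \Cref{lemma1} via \Cref{lemma2} and \Cref{rholt1}, with the common fixed point $\x^*=(\I_n-\P)^{-1}\q$. The only (cosmetic) differences are that you additionally record the convergence of $\y_k$ and characterize the fixed-point set exactly rather than merely exhibiting $\z^*$ as a member, and your indexing of $\alpha_k$ in the recursion is in fact the consistent one.
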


  \begin{proof}
    The gradient of \cref{qloss} is $\nabla f (\x) = \A^\top \! (\A\x-\b)$. Therefore, we can write step \ref{pnp:ytox} of \Cref{alg:alpha-pnp-fista} as
    $\x_k = \P_{\gamma} \, \y_k + \q$, where $\P_{\gamma}$ is defined in \cref{defP} and $\q:= \gamma \W\A^\top \! \b$. Furthermore, from step \ref{pnp:xtoy}, we have
    \begin{equation}
      \label{recurx}
      \x_k = (1+\alpha_k)  \, \P_{\gamma} \x_{k-1} -\alpha_{k-1} \P\x_{k-2}+ \q.
    \end{equation}

    For $k \geqslant 1$, define the state space variables
    \begin{equation}
      \label{defz}
      \z_k  =
      \begin{pmatrix}
        \x_{k} \\
        \x_{k-1}
      \end{pmatrix} \in \Re^{2n}.
    \end{equation}
    Since $\ker(\A) \, \cap \, \fix(\W) = \{ \ze\}$ and $\gamma \in (0,1/\lambda_{\max}(\A^\top \! \A))$, we know from \Cref{lemma2} that $\rho(\P_{\gamma}) < 1$. In particular, this means $\I_n - \P_{\gamma}$ is nonsingular, and we define
    \begin{equation}
      \label{defz*}
      \x^* :=(\I_n - \P_{\gamma})^{-1} \q, \qquad  \quad  \z^*  :=
      \begin{pmatrix}
        \x^* \\
        \x^*
      \end{pmatrix} \in \Re^{2n}.
    \end{equation}

    We now use \Cref{lemma1} to show  that $\{\z_k\}_{k \geqslant 1}$ converges linearly to $\z^*$. From \cref{recurx}, we can write $\z_k = \T_k (\z_{k-1})$, where
    \begin{equation*}
      \R_k :=
      \begin{pmatrix}
        (1+\alpha_k) \P & - \alpha_k \P\\
        \I_n & \phantom{-} \ze_n
      \end{pmatrix},
      \quad
      \s :=
      \begin{pmatrix}
        \q \\
        \ze
      \end{pmatrix},
      \quad
      \T_k(\z) := \R_k \z + \s.
    \end{equation*}
    The setup is now similar to \Cref{lemma1}. We define
    \begin{equation*}
      \R_{\infty}:=
      \begin{pmatrix}
        2 \P_{\gamma} & - \P_{\gamma}\\
        \I_n & \phantom{-} \ze_n
      \end{pmatrix},
    \end{equation*}
    and verify the assumptions in \Cref{lemma1}. Since $\alpha_k \to 1$, it is evident that $\lim_{k \to \infty}  \R_k = \R_{\infty}$. On the other hand, it follows from \Cref{lemma2} that $\sigma(\P_{\gamma}) \subset [0,1)$. Hence, from \Cref{rholt1}, we have $\rho(\R_{\infty})<1$. Finally, we can check from \cref{defz*} that
    $\z^* \in \fix (\T_k)$ for all $k \geqslant 1$. Therefore, we can conclude from \Cref{lemma1} that there exist $\beta \in [0,1)$, constant $C>0$, norm $\| \cdot \|$ on $\R^{2n}$, and $K \geqslant 1$ such that
    \begin{equation}
      \label{linconv}
      \forall \, k \geqslant K: \quad \|\z_k - \z^* \| \leqslant C \, \beta^{k}.
    \end{equation}
    Since $\|\x_k - \x^*\| \leqslant c \, \|\z_k - \z^* \| $ for some constant $c >0$ (see definition \cref{defz}), it is immediate from \cref{linconv} that $\{\x_k\}_{k \geqslant 1}$ converges linearly to $\x^*$.

    The analysis for \Cref{alg:alpha-red-apg} is similar. For the loss in \cref{qloss}, we have
    \begin{equation*}
      \prox_{(\lambda L)^{-1} \, f } (\y) = \H_{\mu} \y + \r,
    \end{equation*}
    where $\mu:=(\lambda L)^{-1}, \, \H_{\mu}:= (\I_n+\mu \A^\top\! \A)^{-1}$, and $\r:= \mu \, \H_{\mu} \A^\top\! \b$. Thus, we can write step \ref{red:vtox} of \Cref{alg:alpha-red-apg} as $\x_{k} = \H_{\mu} \v_{k-1} + \r$. Combining this with steps \ref{red:xtoy} and \ref{red:ytov}, we have
    \begin{equation*}
      \x_k = (1+\alpha_{k-1})\,  \P_{\mu, \theta} \, \x_{k-1} - \alpha_{k-1} \P_{\mu, \theta} \, \x_{k-2} + \r,
    \end{equation*}
    where
    \begin{equation*}
      \P_{\mu, \theta} := \H_{\mu} \W_{\theta} \quad \mbox{and} \quad \W_{\theta}:= \theta \W + (1-\theta) \I_n.
    \end{equation*}

    Beyond this point, the analysis proceeds as before. Namely, we move from the iterates $\{\x_k\}$ to the state space sequence $\{\z_k\}$, establish linear convergence of $\{\z_k\}$ using \Cref{rholt1} and \Cref{lemma1}, and finally deduce linear convergence of $\{\x_k\}$.
  \end{proof}
  \begin{corollary}
    \label{corr1}
    Suppose \texttt{PnP-FISTA} or \texttt{RED-APG} is used for inpainting, deblurring, or superresolution, where the loss is \cref{qloss} and the {denoiser is \cref{DSG-NLM}.} Then the iterates of \texttt{PnP-FISTA} (resp.~\texttt{RED-APG}) converge globally and linearly to a unique reconstruction for any $0 < \gamma < 1/\lambda_{\max}(\A^\top \! \A)$ (resp.~$L \geqslant 1$).
  \end{corollary}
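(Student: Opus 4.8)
The plan is to obtain \Cref{corr1} as a direct instantiation of \Cref{thm:linconvg}: there is essentially nothing new to prove, only the three hypotheses of that theorem to verify for the stated setting. First, the loss is the quadratic \cref{qloss}, which is assumed. Second, the pair $(\W,\A)$ must satisfy \cref{RNP}; here $\W$ is $\W_{\texttt{SYM}}$ from \cref{DSG-NLM}, which is symmetric, and $\A$ is the forward operator of inpainting, deblurring, or superresolution. Under the mild conditions recorded just before \Cref{verification-of-rnp} (at least one sampled pixel; a nonnegative, nonzero lowpass blur), \Cref{propertiesW} gives that $\W_{\texttt{SYM}}$ is well-defined, stochastic, irreducible, with $\sigma(\W_{\texttt{SYM}})\subset[0,1]$, and then \Cref{verification-of-rnp} yields $\ker(\A)\cap\fix(\W_{\texttt{SYM}})=\{\ze\}$; together these are exactly \cref{RNP}. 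So this step is a citation. Since $\W_{\texttt{SYM}}$ is symmetric, it is the symmetric-case \Cref{thm:linconvg} that applies, and no appeal to the nonsymmetric extension is needed.

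Third, I would identify \texttt{PnP-FISTA} and \texttt{RED-APG} with \Cref{alg:alpha-pnp-fista} and \Cref{alg:alpha-red-apg} for the particular momentum sequence $\alpha_k=(t_k-1)/t_{k+1}$ generated by \cref{tk}, and check that $\alpha_k\to1$. From $t_{k+1}=\tfrac12\big(1+\sqrt{1+4t_k^2}\big)$ one gets $t_k+\tfrac12\leqslant t_{k+1}\leqslant t_k+1$, so $t_k\to\infty$ and hence $t_k/t_{k+1}\to1$; consequently $\alpha_k=t_k/t_{k+1}-1/t_{k+1}\to1$. Thus \texttt{PnP-FISTA} is an instance of \Cref{alg:alpha-pnp-fista} with $\alpha_k\to1$, and likewise \texttt{RED-APG} of \Cref{alg:alpha-red-apg}.

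With these three points in place, \Cref{thm:linconvg} applies verbatim. For \texttt{PnP-FISTA} the iterates converge linearly for every $\gamma\in(0,1/\lambda_{\max}(\A^\top\!\A))$; for \texttt{RED-APG} the relevant internal parameters are $\theta=1/L\in[0,1]$ and $\mu=(\lambda L)^{-1}>0$, for which \Cref{lemma2} guarantees $\sigma(\P_{\mu,\theta})\subset[0,1)$, so the iterates converge linearly for every $L\geqslant1$. Global convergence to a \emph{unique} reconstruction is built into \Cref{lemma1}: the limit is the unique point of $\bigcap_{k}\fix(\T_k)$ and hence is independent of the initialization; in the \texttt{PnP-FISTA} case it is $\x^*=(\I_n-\P_{\gamma})^{-1}\q$ with $\q=\gamma\,\W\A^\top\!\b$. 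I do not anticipate any genuine obstacle: the only non-bookkeeping step is the elementary limit $\alpha_k\to1$, and even the superresolution subtlety $\A\one\neq\ze$ has already been dispatched inside the proof of \Cref{verification-of-rnp}.
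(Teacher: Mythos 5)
Your proposal is correct and follows the same route as the paper: identify the momentum sequence $\alpha_k=(t_k-1)/t_{k+1}$, show $\alpha_k\to 1$, verify \cref{RNP} via \Cref{propertiesW} and \Cref{verification-of-rnp}, and invoke \Cref{thm:linconvg}. The only difference is that you supply the elementary bounds $t_k+\tfrac12\leqslant t_{k+1}\leqslant t_k+1$ to justify $\alpha_k\to1$, which the paper states without detail.
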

  
\begin{proof}
This follows from \Cref{thm:linconvg}. Indeed, we have $\alpha_k = (t_k-1)/t_{k+1}$ for \texttt{PnP-FISTA} and \texttt{RED-APG} which converges to $1$.
Hence, we have $\alpha_k \to 1$ for \texttt{PnP-FISTA} and \texttt{RED-APG}. The rest of the argument follows from \cref{verification-of-rnp} and \cref{thm:linconvg}.
  \end{proof}

  \subsection{Extension to nonsymmetric $\W$}
  \label{subsection:nonsymm}
  The convergence analysis for \texttt{PnP-FISTA}  can be extended to the nonsymmetric kernel denoiser $\W_\texttt{NLM}$ defined in \cref{NLM}. However, as pointed out in \cite{gavaskarPlugandPlayRegularizationUsing2021a}, we need to work with an inner product different from the standard inner product on $\Re^n$. More precisely, we need to run \texttt{PnP-FISTA} in the Euclidean space $(\Re^n, \langle \cdot, \cdot \rangle_{\D})$, where the inner-product $\langle \cdot, \cdot \rangle_{\D}$ is given by\footnote{Note that $\W_\texttt{NLM}$ is self-adjoint with respect to \cref{ip}. Moreover, as observed in \cite{gavaskarPlugandPlayRegularizationUsing2021a}, $\W_\texttt{NLM}$ can be expressed as the proximal operator (w.r.t. the norm induced by \cref{ip}) of a closed, proper, and convex function.}
    \begin{equation}
    \label{ip}
    \langle \x, \y \rangle_{\D} := \x^\top \D \y.
  \end{equation}
  We note that \cref{ip} is a valid inner product since $\D$ is positive definite by construction.
  
  Accordingly, step \ref{pnp:ytox} in \texttt{PnP-FISTA} (and \Cref{alg:alpha-pnp-fista}) must be changed to
  \begin{equation}
    \label{newupdate}
    \x_{k} =\W_\texttt{NLM}  \big(\y_k - \gamma \nabla_{\D}  f(\y_k) \big),
  \end{equation}
  where $\nabla_{\D}  f$ is the gradient of $f$ with respect to the inner-product \cref{ip}. In fact, we have $\nabla_{\D}  f(\y) = \D^{-1} \A^\top \! (\A\y-\b)$. Hence, we can write \cref{newupdate} as
  \begin{equation*}
    \x_{k} =\W_\texttt{NLM}  \big(\y_k - \gamma \D^{-1} \A^\top \! (\A\y_k-\b) \big) = \P_{\gamma} \y_k +  \q,
  \end{equation*}
  where
  \begin{equation}
    \label{newP}
    \P_{\gamma} := \W_\texttt{NLM} (\I_n - \gamma \D^{-1} \A^\top \! \A) \quad \mbox{and} \quad \q:= \gamma \W_\texttt{NLM}\D^{-1} \A^\top \!  \b.
  \end{equation}
  The other updates in  \texttt{PnP-FISTA} stay unchanged. Following \cite{gavaskarPlugandPlayRegularizationUsing2021a}, we will refer to this modified algorithm as \texttt{scaled-PnP-FISTA}. Similar to \Cref{corr1}, we have the following result\footnote{A similar analysis holds for \texttt{RED-APG}, where we can establish linear convergence for a nonsymmetric kernel denoiser by incorporating \cref{ip} and the corresponding norm in \Cref{alg:alpha-red-apg}.}.

  \begin{corollary}
    \label{corr2}
    Suppose we use \texttt{scaled-PnP-FISTA}  for inpainting, deblurring, or superresolution, where the loss is \cref{qloss} and the denoiser is \cref{NLM}. Then the iterates exhibit global linear convergence for any {$0 < \gamma < 1/\lambda_{\max}(\D^{-1/2}\A^\top \! \A\D^{-1/2})$}.
  \end{corollary}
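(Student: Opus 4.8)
The plan is to reduce \Cref{corr2} to \Cref{thm:linconvg} by changing the ambient inner product, exactly as suggested by the surrounding discussion. The key observation is that \texttt{scaled-PnP-FISTA} is precisely \Cref{alg:alpha-pnp-fista} run in the Euclidean space $(\Re^n, \langle\cdot,\cdot\rangle_{\D})$, with momentum parameters $\alpha_k=(t_k-1)/t_{k+1}\to 1$, denoiser $\W_\texttt{NLM}$, and loss \cref{qloss}. So I would first record that \cref{newP} gives $\x_k = \P_\gamma \y_k + \q$ with $\P_\gamma = \W_\texttt{NLM}(\I_n-\gamma\D^{-1}\A^\top\!\A)$, which is the scaled-space analogue of $\P_\gamma$ in \cref{defP}. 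The strategy is to exhibit a change of variables that conjugates $\P_\gamma$ into the symmetric-$\W$ setting of \Cref{lemma2}, so that $\rho(\P_\gamma)<1$ and the dynamical-system argument of \Cref{thm:linconvg} applies verbatim.

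Concretely, I would set $\S:=\D^{1/2}$ (well-defined since $\D$ is positive definite) and pass to the variable $\tilde\x=\S\x$; this is an isometry from $(\Re^n,\langle\cdot,\cdot\rangle_{\D})$ to $(\Re^n,\langle\cdot,\cdot\rangle_2)$. Under this conjugation, $\P_\gamma$ becomes $\tilde\P_\gamma := \S\P_\gamma\S^{-1} = (\S\W_\texttt{NLM}\S^{-1})(\I_n - \gamma\, \S^{-1}\A^\top\!\A\S^{-1})$. Writing $\tilde\W := \S\W_\texttt{NLM}\S^{-1}=\D^{1/2}\W_\texttt{NLM}\D^{-1/2}=\D^{-1/2}\K\D^{-1/2}$, which is symmetric, and $\tilde\A := \A\D^{-1/2}$, we get $\tilde\P_\gamma = \tilde\W(\I_n-\gamma\tilde\A^\top\!\tilde\A)$, which is exactly the operator $\P_\gamma$ of \cref{defP} built from the pair $(\tilde\W,\tilde\A)$. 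So it suffices to check that $(\tilde\W,\tilde\A)$ satisfies \cref{RNP}: the eigenvalues of $\tilde\W$ coincide with those of $\W_\texttt{NLM}$ (similar matrices), hence lie in $[0,1]$ by \Cref{propertiesW}; and since $\ker(\tilde\A)=\S^{-1}\ker(\A)$ (no wait—$\tilde\A\x=\ze \iff \A\D^{-1/2}\x=\ze \iff \D^{-1/2}\x\in\ker(\A)$, i.e.\ $\ker(\tilde\A)=\D^{1/2}\ker(\A)=\S\ker(\A)$) and $\fix(\tilde\W)=\S\fix(\W_\texttt{NLM})$, the nontrivial-intersection condition $\ker(\tilde\A)\cap\fix(\tilde\W)=\{\ze\}$ is equivalent to $\ker(\A)\cap\fix(\W_\texttt{NLM})=\{\ze\}$, which holds by \cref{verification-of-rnp}. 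By \Cref{lemma2}, $\sigma(\tilde\P_\gamma)\subset[0,1)$ for $\gamma\in(0,1/\lambda_{\max}(\tilde\A^\top\!\tilde\A))=(0,1/\lambda_{\max}(\D^{-1/2}\A^\top\!\A\D^{-1/2}))$; since $\P_\gamma\sim\tilde\P_\gamma$, the same holds for $\P_\gamma$.

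From here the argument is identical to the proof of \Cref{thm:linconvg}: form $\z_k=(\x_k,\x_{k-1})\in\Re^{2n}$, write $\z_k=\R_k\z_{k-1}+\s$ with the same block structure (now using the new $\P_\gamma$ and $\q$ from \cref{newP}), note $\R_k\to\R_\infty$ because $\alpha_k\to1$, invoke \Cref{rholt1} with $\P=\P_\gamma$ to get $\rho(\R_\infty)<1$, check that $\z^*=(\x^*,\x^*)$ with $\x^*=(\I_n-\P_\gamma)^{-1}\q$ is a common fixed point of all $\T_k$, and apply \Cref{lemma1}. Linear convergence of $\{\x_k\}$ in the $\D$-norm (equivalently, in any norm) follows, proving the corollary.

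I do not expect a genuine obstacle here; the content is bookkeeping around the conjugation by $\D^{1/2}$. The one point that needs a little care is the step-size range: the relevant spectral bound is $\lambda_{\max}(\tilde\A^\top\!\tilde\A)=\lambda_{\max}(\D^{-1/2}\A^\top\!\A\D^{-1/2})$, which is why the corollary states the bound in that form rather than $1/\lambda_{\max}(\A^\top\!\A)$; one should also double-check that $\I_n-\gamma\tilde\A^\top\!\tilde\A=\S(\I_n-\gamma\D^{-1}\A^\top\!\A)\S^{-1}$, i.e.\ that the $\D^{-1}\A^\top\!\A$ appearing in \cref{newP} indeed conjugates to $\tilde\A^\top\!\tilde\A$, which it does since $\S^{-1}(\D^{-1}\A^\top\!\A)\S = \D^{-1/2}\D^{-1}\A^\top\!\A\D^{1/2}$—hmm, this is not symmetric, so I should instead conjugate $\P_\gamma$ as a whole rather than factor by factor: $\S\P_\gamma\S^{-1}=\S\W_\texttt{NLM}(\I_n-\gamma\D^{-1}\A^\top\!\A)\S^{-1}=(\S\W_\texttt{NLM}\S^{-1})\,\S(\I_n-\gamma\D^{-1}\A^\top\!\A)\S^{-1}$, and $\S(\I_n-\gamma\D^{-1}\A^\top\!\A)\S^{-1}=\I_n-\gamma\,\D^{1/2}\D^{-1}\A^\top\!\A\D^{-1/2}=\I_n-\gamma\D^{-1/2}\A^\top\!\A\D^{-1/2}=\I_n-\gamma\tilde\A^\top\!\tilde\A$. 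Good—so the factorization does go through once one groups the $\S,\S^{-1}$ correctly, and this is the only spot deserving attention.
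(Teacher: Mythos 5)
Your proposal is correct and follows essentially the same route as the paper: both conjugate $\P_{\gamma}$ by $\D^{1/2}$ to obtain the symmetric pair $\W_s=\D^{-1/2}\K\D^{-1/2}$ and $\G_{\gamma}=\I_n-\gamma\D^{-1/2}\A^\top\!\A\D^{-1/2}$, verify the spectral and fixed-point conditions of \Cref{lemma2} for this pair, and then reuse the dynamical-system argument of \Cref{thm:linconvg}. Your explicit identification $\tilde{\A}=\A\D^{-1/2}$ and the check that $\ker(\tilde{\A})\cap\fix(\tilde{\W})=\D^{1/2}\bigl(\ker(\A)\cap\fix(\W_{\texttt{NLM}})\bigr)=\{\ze\}$ is just a cleaner packaging of the same verification the paper performs.
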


 {We remark that since $\D_{ii}\geqslant 1$, we have $\lambda_{\max}(\D^{-1/2}\A^\top\!\A\D^{-1/2}) \leqslant \lambda_{\max}(\A^\top\!\A)$. Thus, we can also use the interval $(0,1/\lambda_{\max}(\A^\top\!\A))$ in \Cref{corr2}, which does not depend on $\D$.}

  \begin{proof}
   We are done if we can show that $\sigma(\P_{\gamma}) \subset [0,1)$, where $\P_{\gamma}$ is as defined in \cref{newP}. The rest of the analysis is identical to \Cref{corr1}.

    To verify $\sigma(\P_{\gamma}) \subset [0,1)$, we use \cref{NLM} to write $\P_{\gamma} \sim \D^{\frac{1}{2}} \, \P_{\gamma} \, \D^{-\frac{1}{2}} = \W_s\G_{\gamma}$, where
    \begin{equation*}
      \W_s:= \D^{-\frac{1}{2}}\K \D^{-\frac{1}{2}} \quad \mbox{and} \quad \G_{\gamma} := \I_n - \gamma  \D^{-\frac{1}{2}} \A^\top \! \A  \D^{-\frac{1}{2}}
    \end{equation*}
    In particular, $\sigma(\P_{\gamma}) = \sigma(\W_s\G_{\gamma})$ and $\W_s, \, \G_{\gamma} \in \Sy^n$. We are now in the setup of  \Cref{lemma2}.
    {Since $\W_s$ is similar to $\W_{\texttt{NLM}}$ and $\sigma(\W_\texttt{NLM}) \subset [0,1]$ (\Cref{propertiesW}), we have $\sigma(\W_s) \subset [0,1]$.}
    On the other hand, it is not difficult to verify that $\sigma(\G_{\gamma}) \subset [0,1]$ if {$0 < \gamma < 1/\lambda_{\max}(\D^{-1/2}\A^\top \! \A\D^{-1/2})$}. Furthermore, 
    \begin{equation*}
      \fix(\W_s) \, \cap \, \fix(\G_{\gamma}) = \big(\D^{\frac{1}{2}}\, \Span(\one)\big) \,  \cap \, \big( \D^{\frac{1}{2}} \ker (\A)\big) = \{\ze \},
    \end{equation*}
    since $ \fix(\W_s) = \D^{\frac{1}{2}}  \fix(\W_{\texttt{NLM}})$  and $\fix(\W_\texttt{NLM})=\Span(\one)$ by \Cref{propertiesW}.
    Similar to \Cref{lemma2}, we can now easily verify that $\sigma(\W_s\G_{\gamma}) \subset [0,1)$.
  \end{proof}

\section{Numerical results}
\label{sec:exp}

{We present results\footnote{The code is available here: \url{https://github.com/arghyasinha/PnP-FISTA}.} for inpainting, deblurring, and superresolution\footnote{The test images are from the Set12 database: \url{https://github.com/cszn/DnCNN/tree/master/testsets/Set12}} to provide numerical evidence supporting the theoretical results.
In particular, we validate the convergence theorems, confirm the uniqueness of the reconstruction for different initializations, and test whether $\rho(\R_{\infty}) < 1$.}

{For deblurring, we applied an isotropic Gaussian blur of $25\times 25$ size and standard deviation $1.6$. Additionally, we introduced white Gaussian noise of standard deviation $\sigma=0.03$.} For inpainting, we randomly sampled $30\%$ pixels. In all the experiments, we have used the symmetric \texttt{DSG-NLM} denoiser~\cite{sreehari2016plug}. The deblurring and inpainting results are shown in \Cref{fig:deblurring} and \Cref{fig:inpainting}. We demonstrate in \Cref{fig:inpainting} that the \texttt{PnP-FISTA} iterates converge to the same reconstruction independent of the initialization (\Cref{thm:linconvg}). {
The results for a  $2\times$-superresolution experiment are shown in \Cref{fig:SR}, where we have compared the reconstruction from \texttt{DSG-NLM} with that obtained using trained denoisers such as \texttt{DnCNN} \cite{dncnn} and \texttt{DRUNet} \cite{zhang2021plug}. We have used the Lipschitz-constrained \texttt{DnCNN}\footnote{\url{https://deepinv.github.io/deepinv/deepinv.denoisers.html\#pretrained-weights}} denoiser from \cite{pesquet_learning_2021}, which comes with a convergence guarantee for \cref{eq:pnpista}. The \texttt{DRUNet} result is superior to \texttt{DnCNN} \cite{pesquet_learning_2021} and \texttt{DSG-NLM}. {However, we note that \texttt{DRUNet} does not come with convergence guarantees for \texttt{PnP} and \texttt{RED} (see \Cref{fig:divergence}). While \texttt{DnCNN} achieves a higher PSNR than \texttt{DSG-NLM}, it introduces visual artifacts, possibly due to the low noise levels used during its training \cite{pesquet_learning_2021}. Overall, we observe a tradeoff between the regularization capacity of a denoiser and its convergence properties, a challenge that is widely recognized within the community \cite{goujon_learning_2024,hurault2022gradient,nair2024averaged,terris2024equivariant}.}}

\begin{figure}[t]
\centering
\subfloat[ground truth]{\label{fig:f}\includegraphics[width = 0.24\textwidth]{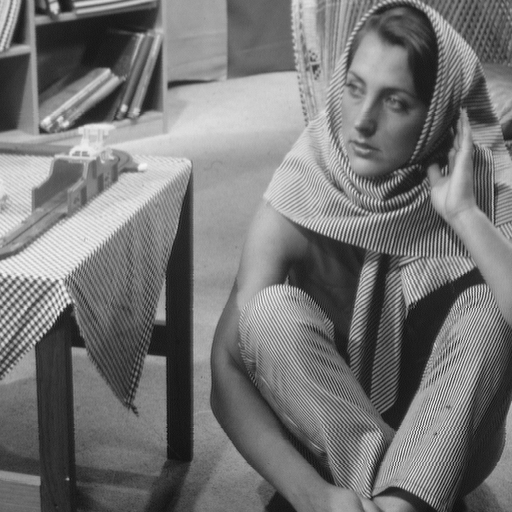}}
\hfill
\subfloat[observed ]{\label{fig:e}\includegraphics[width = 0.24\textwidth]{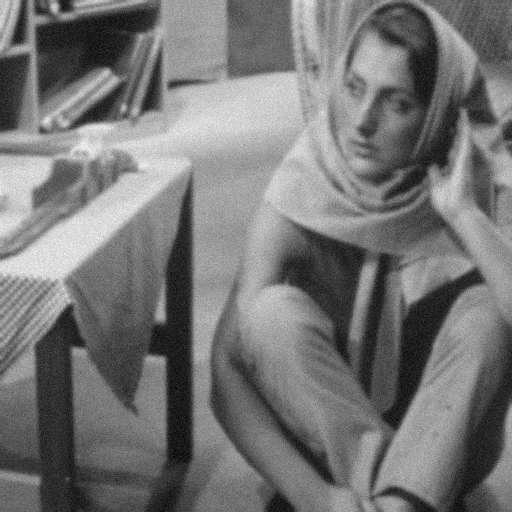}}
\hfill
\subfloat[\texttt{PnP-FISTA}]{\label{fig:g}\includegraphics[width = 0.24\textwidth]{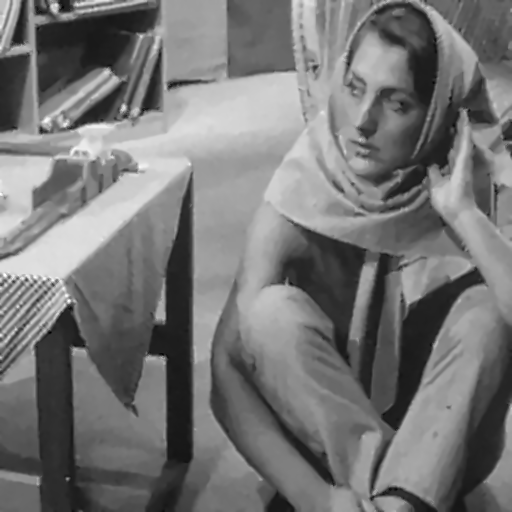}}
\hfill
\subfloat[\texttt{RED-APG}]{\label{fig:h}\includegraphics[width = 0.24\textwidth]{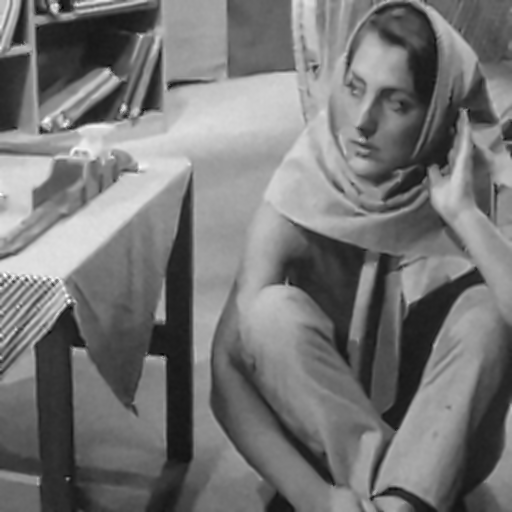}}
\caption{Deblurring using a symmetric kernel denoiser. The PSNR and SSIM values are (b) $23.07, \, 0.5317$, (c) $24.16, \, 0.7053$, and (d) $24.15, \, 0.7025$. We have used $\gamma=0.9$ for \texttt{PnP-FISTA} and $\lambda=1,L=2$ for \texttt{RED-APG} and (b) observed image as the guide image for the denoiser.}
\label{fig:deblurring}
\end{figure}

We next compute the spectral radius $\rho(\R_\infty)$ using the power method \cite{matrixcomputations}. We recall that  $\rho(\R_\infty)$ is a lower bound on the convergence rate $\beta$ in \cref{linconv}. As shown in \Cref{tab:1},  $\rho(\R_\infty)$ is very close to but strictly less than one for inpainting and {deblurring} (\Cref{rholt1})

 Finally, we study the empirical convergence of \Cref{alg:alpha-pnp-fista} and \Cref{alg:alpha-red-apg} for different choices of the momentum parameter $\alpha_k$ (\Cref{thm:linconvg}). This is done for the inpainting experiment in \Cref{fig:inpainting}.  Along with the $\{\alpha_k\}$ sequence in Beck~\cite{beck_fast_2009} and Chambolle~\cite{chambolle2015convergence}, we have used $\alpha_k =1 - 1/\log (k+1)$ and $\alpha_k = 1- (0.5)^k$ that have the property $\alpha_k \to 1$. We plot the distance of the iterates from the unique limit point in \Cref{fig:plot}. Notably, observe that the iterates of \Cref{alg:alpha-pnp-fista} can be slower to converge compared to the non-accelerated variants \texttt{PnP-ISTA} and \texttt{RED-PG} (\Cref{alg:alpha-pnp-fista} and \Cref{alg:alpha-red-apg}), depending on the choice of $\{\alpha_k\}$. This is consistent with the findings in \cite{odonoghue_adaptive_2015,tao2016local}.

\begin{figure}
    \centering
    \includegraphics[width=0.9\linewidth]{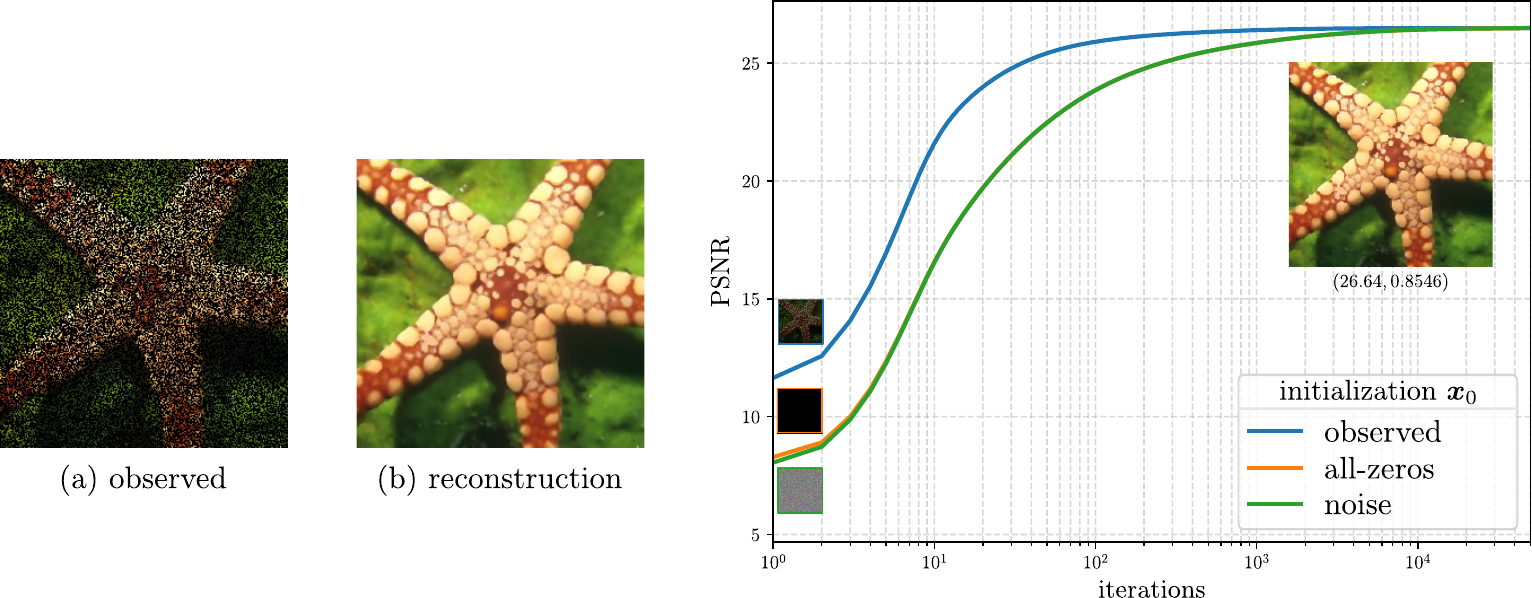}
    \caption{Global convergence of \texttt{PnP-FISTA}. Inpainting results with different initialization $\x_0$ (see the legend) converges to the same reconstruction (b) having PSNR $26.64$ and SSIM $0.8546$. We have used \texttt{PnP-FISTA} with $\gamma=0.9$, and median filtered the observed image to get the guide image. 
    }
    \label{fig:inpainting}
\end{figure}

\begin{figure}[tbhp]
    \subfloat[ground truth]{\includegraphics[width=0.18\textwidth]{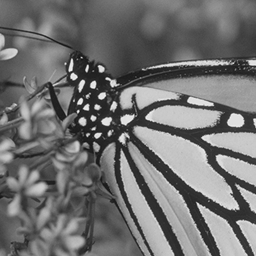}}\hfill
    \subfloat[observed]{\includegraphics[width=0.18\textwidth]{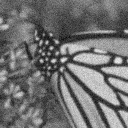}}\hfill
     \subfloat[\centering \texttt{DSG-NLM}]{\includegraphics[width=0.18\textwidth]{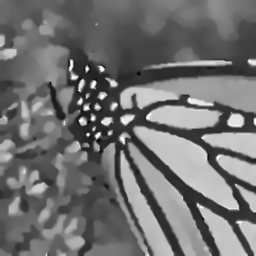}}\hfill
     \subfloat[\centering \texttt{DnCNN}]{\includegraphics[width=0.18\textwidth]{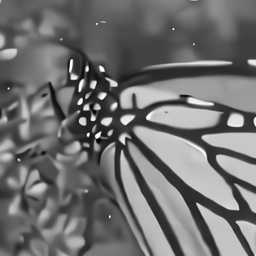}}\hfill
     \subfloat[\centering \texttt{DRUNet}]{\includegraphics[width=0.18\textwidth]{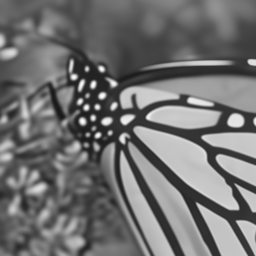}}
    \caption{Comparison of superresolution results with different denoisers. The forward operator is $\A = \S\B$, where $\B$ is $25\times 25$ Gaussian blur with standard deviation $1.6$ and $\S$ is $2\times$-downsampling. We also added white Gaussian noise of strength $\sigma = 0.04$ to the measurement. The PSNR and SSIM values are (c) $25.27, \, 0.8201$, (d) $25.35, \, 0.8324$, and (e) $25.99, \,0.8445$. We used bicubic interpolation on the observed image to generate a guide image for \texttt{DSG-NLM}.}
    \label{fig:SR}
\end{figure}

\begin{figure}
\centering
\subfloat[$\alpha$-\texttt{PnP-FISTA}]{\includegraphics[width = 0.5\textwidth]{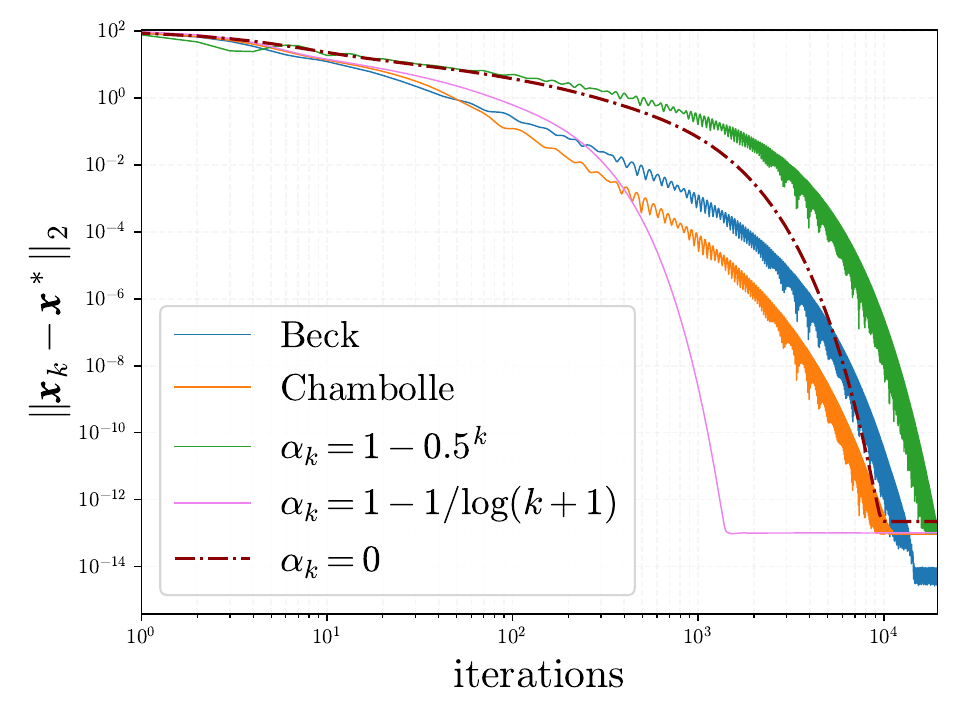}}
\subfloat[$\alpha$-\texttt{RED-APG}]{\includegraphics[width = 0.5\textwidth]{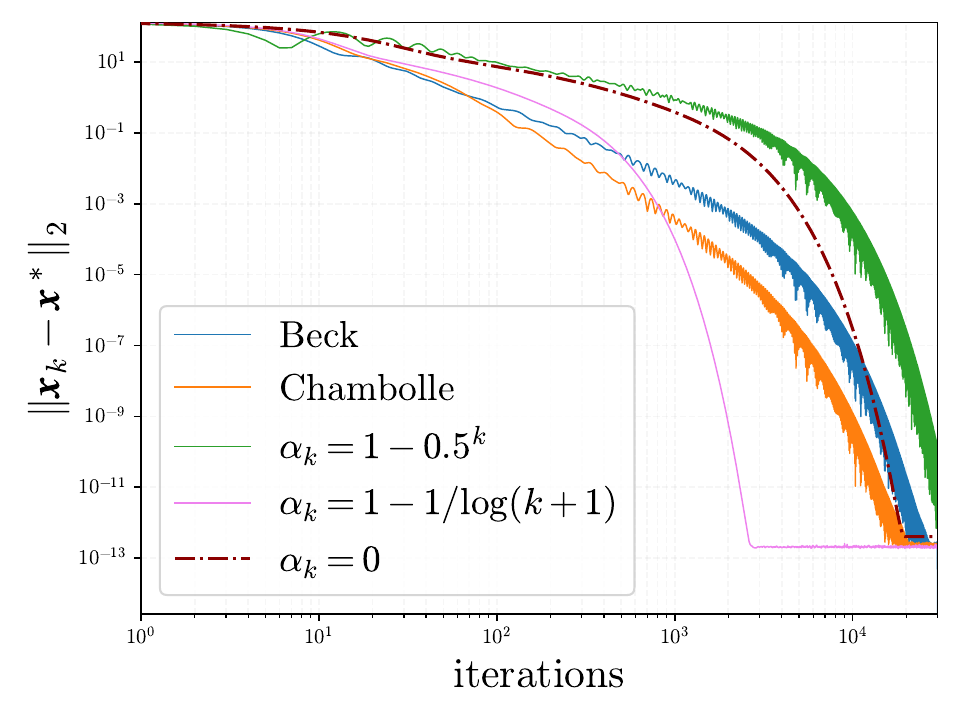}}
\caption{Iterate convergence for different $\{\alpha_k\}$ having the property $\alpha_k \to 1$. The case $\alpha_k=0$ correspond to  \texttt{PnP-ISTA} and \texttt{RED-PG}, the non-accelerated variants of \texttt{PnP-FISTA} and \texttt{RED-APG}. {The limit point $\x^*$ was approximated using $20000$ iterations of \texttt{PnP-FISTA} and  $30000$ iterations of \texttt{RED-APG} using the Beck sequence of $\{\alpha_k\}$. The parameters used are $\gamma = 0.9$ for \texttt{PnP-FISTA} and $\lambda = 1,L = 2$ for \texttt{RED-APG}.}}
\label{fig:plot}
\end{figure}

\begin{table}
{\footnotesize
  \caption{Values of $\rho({\R_\infty})$ for for different applications and parameter settings.}
  \label{tab:simpletable}
\begin{center}
\begin{tabular}{|c|c|c|c|c|} \hline
  & \multicolumn{2}{c|}{\texttt{PnP-FISTA}} &\multicolumn{2}{c|}{\texttt{RED-APG}}\\ \hline
$\gamma,1/L$ & Inpainting & Deblurring& Inpainting&Deblurring \\ \hline
0.10 & 0.993389 & 0.999920 & 0.996205  &  0.999914  \\
0.25 & 0.992201 & 0.999907 & 0.997254  &  0.999905  \\
0.50 & 0.991614 & 0.999893 & 0.994531  &  0.999893 \\
0.75 & 0.991415 & 0.999884 & 0.991786  &  0.999884\\
0.90 & 0.991297 & 0.999880 & 0.990136  &  0.999880\\ \hline
\end{tabular}
\end{center}
}
\label{tab:1}
\end{table}

\clearpage
  \section{Conclusion}
  \label{sec:conc}
  We presented a concise analysis of the global linear convergence of \texttt{FISTA} in the context of regularizing linear inverse problems with symmetric denoisers. Additionally, we showed how the results can be extended to nonsymmetric denoisers by operating in an appropriate Euclidean space and using a scaled variant of the original algorithm. We restricted the scope to linear denoisers to keep the analysis tractable. However, data-driven linear denoisers are interesting in their own right in that they come with fast algorithms \cite{nair2019fast} and give good reconstructions for different applications. It would be interesting to see if the present analysis could shed some insights on the convergence theory of trained nonlinear denoisers that give state-of-the-art reconstructions.

  \bibliographystyle{siamplain}
  \bibliography{references}

\end{document}